\documentclass[pdflatex,sn-basic,margin = 1in]{sn-jnl}
\usepackage[utf8]{inputenc}
\usepackage[T1]{fontenc}
\usepackage{latexsym,amssymb,amsfonts,amsmath,amsthm,mathrsfs}
\usepackage{graphicx,float,xcolor,textcomp,bigints,mathdots}
\usepackage{longtable,tabularx}
\usepackage[title]{appendix}
\usepackage{hyperref}
\numberwithin{equation}{section}
\theoremstyle{thmstyleone}
\newtheorem{theorem}{Theorem}[section]
\newtheorem{lemma}[theorem]{Lemma}
\theoremstyle{thmstyletwo}
\newtheorem{remark}[theorem]{Remark}

\newcommand{\pg}[1]{{\color{black}  #1}}
\hypersetup{hidelinks}
\setcitestyle{authoryear,round,semicolon,aysep={},yysep={,}}
\begin{document}

\title{Symmetry-breaking in a discrete-choice LQG mean-field game}

\author[1]{\fnm{Ali Akbar Rezaei} \sur{Lori}}
\author*[1]{\fnm{Piyush} \sur{Grover}}\email{piyush.grover@unl.edu}
\affil[1]{\orgdiv{Department of Mechanical and Materials Engineering},
\orgname{University of Nebraska--Lincoln},
\orgaddress{\city{Lincoln}, \state{NE}, \country{USA}}}

\abstract{Mean-field games provide a continuum framework for modeling the dynamics of large, interacting populations of non-cooperative agents. This paper studies symmetry-breaking in a finite-horizon, two-choice min-Linear Quadratic Gaussian (LQG) mean-field game in which identical agents with linear stochastic dynamics choose one of two equally desirable terminal destinations, while trading off control effort against social pressure to conform. The model has an odd symmetry between the two destinations and therefore always admits a symmetric, dynamic Nash equilibrium in which the population splits evenly between them, producing a deadlock collective state at final time. Numerical studies have suggested that, as the penalty for social nonconformity increases, this symmetric equilibrium loses stability as a fixed point of an associated scalar, self-consistency map, and asymmetric consensus Nash equilibria emerge, where most agents select the same destination. By analyzing the linearized forward-backward system of partial differential equations through the scalar map representation, we provide a proof of this loss of stability of the symmetric equilibrium. Together with the odd symmetry of the map, this implies the existence of symmetry-broken mean-field game equilibria corresponding to consensus on either destination.}

\keywords{Mean-field games; Stochastic optimal control; Discrete choice; Symmetry breaking; Bifurcations }
\maketitle


\section{Introduction}
Dynamic collective choice problems involve decision-making agents whose objectives are coupled through aggregate effects, and where agents individually choose an option between multiple alternatives or destinations. Such collective decision making is routinely observed in nature, from animal groups that choose a nest site, food source or migrating direction \citep{seeley2001nest,ward2008quorum,wendt2020negative} to molecular switches that select between cell fates \citep{oppenheim2005switches}. 
By studying such systems, researchers have put forth several bio-inspired mathematical frameworks for distributed multi-agent decision making, with potential applications to opinion prediction, robotics, transportation networks and power systems \citep{lu2018multiple,march2024honeybee,salhab2018mean}.  

These mathematical frameworks of collective choice come in several different flavors. Some models treat choice as a static, one-shot decision \citep{brock2001discrete}, while others involve a dynamic system which models how the agents move, gather information, and eventually commit to an alternative \citep{salhab2017dynamic}. A second distinction is between agent-based descriptions, in which each individual is modeled explictly \citep{list2009independence}, and mean-field models, in which the effect of a large population is represented through an aggregate effect \citep{seeley2001nest}. Finally, compared to the deterministic models \citep{pais2013mechanism}, the stochastic models explictly account for the effect of unmodeled dynamics via persistent, individual-level noise \citep{list2009independence}. These modeling choices lead to different analytical structures, ranging from static noncooperative games and agent-based models to Markov decision processes and mean-field games (MFGs). 

The present work is concerned with the analysis of a MFG model of dynamic collective choice, proposed by Salhab et al. \citeyearpar{salhab2019collective}. MFGs are well suited to this setting and provide a continuum framework for studying large populations of interacting strategic agents, where each agent responds to the aggregate behavior of the population. The MFG theory was introduced independently by Huang, Caines, and Malhamé \citeyearpar{huang2006large} and by Lasry and Lions \citeyearpar{lasry2007mean}. Mathematically, MFG solutions are typically characterized by a coupled forward-backward system of nonlinear PDEs: a Hamilton-Jacobi-Bellman equation for the representative agent's value function and a Fokker-Planck equation for the population density. Although certain `monotonicity' conditions ensure uniqueness of MFG solutions \citep{bardi2019non}, a broad range of MFG models have been shown to admit multiple solutions and bifurcations \citep{yinmehmeysha12,kolokoltsov2017mean,grover2018mean,cirant2019existence,cirant2018variational,ullmo2019quadratic,hongler2020mean,grover2024phase,lori2025topological,knochenhauer2025long,hofer2025synchronization}.

In the `min-LQG' MFG formulation of dynamic collective choice \citep{salhab2019collective} that is the focus of the present work, a large population of agents evolves dynamically, while each agent minimizes a cost consisting of three competing contributions: a control cost, a terminal cost that encourages proximity to one of the discrete alternatives, and a social effect term that penalizes disagreement with the population. A key result of \citet{salhab2019collective} is that MFG equilibria can be characterized through the fixed points of a scalar-valued map. The authors further showed numerically that, for sufficiently large social effect, the system admits multiple Nash equilibria: in addition to the symmetric deadlock equilibrium, two asymmetric equilibria appear, corresponding to collective preference for one alternative over the other. 

The appearance of two asymmetric equilibria from a symmetric deadlock state is closely related to the pitchfork mechanisms that organize many finite-dimensional models of collective decision making \citep{gray2018multiagent,march2026symmetry}. In particular, Franci et al. \citeyearpar{franci2015realization} showed that increasing a social effect parameter can destabilize a deadlock state and produce two stable consensus states through a pitchfork bifurcation in networked opinion dynamics. Follow-up work by Gray et al. \citeyearpar{gray2018multiagent} used these results to design multi-agent decision-making dynamics with pitchfork structure inspired by honeybee nest-site selection. More general equivariant bifurcation-theoretic results have also been developed for multi-agent, multi-option decision-making systems, where symmetry-breaking branches correspond to consensus, deadlock, or dissensus patterns \citep{franci2022breaking}. These works provide a finite-dimensional dynamical systems explanation for how symmetric indecision can be broken by social interaction.

The MFG setting considered here has a similar symmetry-breaking phenomenology, but the analytical problem is substantially different. Rather than a finite-dimensional autonomous ODE for agent states, the equilibrium is characterized by a finite-time forward-backward PDE system. Thus, the relevant bifurcation parameter does not enter a closed finite-dimensional vector field directly; rather, it changes the solution of a coupled HJB-FP system. The main objective of this paper is to make this analogy rigorous at the level of the MFG PDEs.

Our analysis proceeds through the scalar self-consistency map introduced in \citet{salhab2019collective}. For each prescribed terminal population split, represented by the share $v\in[0,1]$ of agents choosing the first alternative, the associated optimal control problem and Fokker--Planck equation determine a new share $G(v;q)$ choosing that alternative. Here, $q$ denotes the strength of the social effect term, and the MFG equilibria correspond precisely to fixed points of $v\mapsto G(v;q)$. The symmetry of the two alternatives implies that $G(1-v;q)=1-G(v;q),$
so that $v^*=0.5$ is always a fixed point, corresponding to the symmetric deadlock equilibrium. 

We analyze the linearized map about the symmetric fixed point. This linear map encodes the solution of the linearized HJB-FP system around the symmetric MFG equilibrium and quantifies how perturbations in the prescribed terminal population split are amplified or suppressed. We derive a non-negative integral representation of the linearized map, and use it to obtain a monotonic in $q$ lower bound on the linearized gain of the self-consistency map. This implies that the symmetric fixed point is stable for sufficiently small social effect and loses stability when the social effect is large enough. Using the interval structure of the map and its symmetry, we then show that this loss of stability implies the existence of two additional fixed points, corresponding to symmetry-broken consensus MFG equilibria. We emphasize that the stability considered here is the fixed-point stability of the scalar map, and \emph{not} the temporal stability of the evolution resulting from the forward–backward MFG system.

In the following, the min-LQG MFG formulation is reviewed in Section \ref{sec:minlqg}. In Section \ref{sec:Gmap}, we present some analytical and numerical results for scalar map $G$. In Section \ref{sec:linearmap}, the characterization of the linearized map around the symmetric fixed point is presented. The main stability analysis and symmetry breaking results are discussed in Section \ref{sec:stab}. 
\section{The min-LQG discrete choice MFG problem}\label{sec:minlqg}
In this section, we review the min-LQG MFG formulation developed in Salhab et al. \citeyearpar{salhab2019collective}, specialized to 1D state space with identical agents, and two choices for final destination. The authors consider a population of $N$ cost-coupled selfish agents, where the noisy single integrator dynamics of the $i$th agent are given by
\begin{equation}
    \label{SDE}
    dx^i=(ax^i(t)+b u^i(t))dt+\sigma dw^i,
\end{equation}
where the real scalars $x^i(t)$ and $u^i(t)$ are respectively the $i$th agent's state and control. The $w^i(t)$ are independent Brownian motions with diffusion coefficient $\sigma$. Each agent minimizes the cost \begin{align}
    \label{cost fnc}J(u^i)=\mathbb E\{\int_0^T(\frac{q}{2}(x^i-\bar X)^2+\frac{r}{2}(u^i)^2)dt+\min_{j=1,2}\frac{C}{2}(x^i(T)-y_j)^2\},
\end{align}  
with $X=\dfrac{1}{N}\sum\limits_{i=1}^N x^i$, $q\ge0$, and $r>0$. The running cost incentivizes the agents to stay close to each other while using minimal control, and the terminal cost drives the agents to steer their final state close to one of the two final destination choices, $y_1$ and $y_2$. \pg{While our results generalize in straightforward manner to other parametric regimes, for simplicity of presentation we assume $a>0,b>0$ and $C>\dfrac{ar}{b^2}$.}

The MFG theory \citep{bensoussan2013mean} applied to this system yields that in the infinite population limit, the representative agent’s value function $V$ solves the following HJB equation
\begin{equation}\label{HJB}
   \partial_tV+\dfrac{\sigma^2}{2}\partial_{xx}V+a\,x\,\partial_xV+\frac{q}{2}(x-X)^2-\dfrac{b^2}{2r}(\partial_xV)^2=0,
\end{equation}
with terminal condition $V(T,x)=\min\limits_{j=1,2}\frac{C}{2}(x-y_j)^2$. Here the mean $X(t)=\bigintsss xf(t,x)dx$, where $f(x,t)$ is the agent density. The optimal feedback control is given by 
\begin{align*}
    u^*(t,x)=
    \begin{cases} 
      -\dfrac{b}{r}\partial_xV(t,x) & \text{for } t \in [0,T), \\
      0 & \text{for } t=T.
\end{cases}
\end{align*}  Each agent follows the optimal control law \( u^*(t,x) \), and agent density is governed by the Fokker-Planck equation with the optimal drift
\begin{equation}\label{FP f*_v}
    \partial_tf-\dfrac{\sigma^2}{2}\partial_{xx}f+\partial_x\big((ax+bu^*)f\big)=0,
\end{equation}
and a prescribed initial condition $f(0,x)=f_0(x)$.

\subsection{Solution of the Min-LQG MFG}\label{subsec:map}
Assuming a \emph{prescribed} mean trajectory $X(t)$, the optimal control $u^*(x,t)$ is shown to be a convex combination of the optimal controls $u_j(x,t)$ for the two standard LQG MFG problems obtained by using terminal conditions $V(T,x)=\frac{C}{2}(x-y_j)^2$ in Eqs. (\ref{SDE},\ref{cost fnc}) for $j=1,2$. At each time instance, an agent can choose one of the two $u_j$s as its control policy. 

\begin{align}\label{u*}
    u^*(t,x)=-\dfrac{b}{r}\partial_xV(t,x)=\sum_{j=1}^2\dfrac{\exp({-\eta V_j(t,x)})g_j(t,x)}{\sum\limits_{k=1}^2\exp({-\eta V_k(t,x)})g_k(t,x)}u_j(t,x)=\sum_{j=1}^2 \gamma_j(t,x)u_j(t,x),
\end{align}
with weights 
\begin{align}    
\gamma_j(t,x)=\dfrac{\exp({-\eta V_j(t,x)})g_j(t,x)}{\sum\limits_{k=1}^2\exp({-\eta V_k(t,x)})g_k(t,x)},\label{eq:gamma0} 
\end{align}
where $0\leq \gamma_j\leq 1$ \pg{and $\eta=\dfrac{b^2}{r\sigma^2}$}. 
Here, $\gamma_j(t,x)$ represents the instantaneous choice weight of control $u_j$ for an agent at state $x$ and time $t$. Assuming $y_1<y_2$, let $W_1\triangleq  (-\infty,\dfrac{y_1+y_2}{2})$ be the set of points on the real line closer to $y_1$ than $y_2$, and $W_2$ be its complement. Then, as $t\rightarrow T$, the weights $\gamma_1\rightarrow \mathbf{1}_{x\in W_1}$ and $\gamma_2\rightarrow \mathbf{1}_{x\in W_2}$, i.e., close to terminal time, each agent increasingly commits to the control associated with the destination that is closest to its current state.

The LQG MFG theory states that the value function $V_j$ and control $u_j$ are quadratic and linear functions of the state, respectively: 
\begin{equation}
    \label{Vj}
    \begin{split}
        &V_j(t,x)=\frac{1}{2}p(t)x^2+s_j(t)x+d_j(t), \\&
        u_j(t,x)=-\frac{b}{r}(p(t)x+s_j(t)).
    \end{split}
\end{equation}

The Riccati parameters $(p,s_j,d_j)$ satisfy the following ODEs,
\begin{equation}
    \label{hjb riccati2}
    \begin{split}
        &\dot{p}=\frac{b^2}{r}p(t)^2-2ap(t)-q, \\& \dot{s}_j=(\frac{b^2}{r}p(t)-a)s_j(t)+qX(t), \\&
        \dot{d}_j=\frac{b^2}{2r}(s_j(t))^2-\frac{\sigma^2}{2}p(t)-\frac{1}{2}qX(t)^2,
    \end{split}
\end{equation}
with final conditions $p(T)=C$, $s_j(T)=-Cy_j$, and $d_j(T)=\dfrac{C}{2}y_j^2$. 

In Eq. \ref{u*}, $g_j(t,x)$ is the conditional probability of an agent following the control policy $u_j$ to be in $W_j$, given that its state at time $t$ is $x$, 
\begin{align} \label{eq:gj}
    g_j(t,x)=\mathbb{P}(x_j(T)\in W_j|x_j(t)=x, u=u_j)=\frac{1}{\sqrt{2\pi\Sigma(t)}}\int_{W_j}\exp{\bigg(-\frac{(y-X_j(t) )^2}{2\Sigma(t)}\bigg)}dy,
\end{align} where $x_j(t)$ is the solution of the SDE \ref{SDE} with $u^i=u_j$, and initial condition $x^i(t)=x$. The mean $X_j$ and variance $\Sigma$ of the process are given by \begin{equation}\label{Xj Sigma}
    \begin{split}
        &X_j(t)= x\alpha(T,t)-\int_t^T \alpha(T,\tau)\frac{b^2}{r}s_j(\tau)d\tau, \\& \Sigma=\int_t^T\alpha^2(T,\tau)\sigma^2\,d\tau,
    \end{split}
\end{equation}
 where $\alpha(t,t_0)$ is the fundamental solution governed by 
 \begin{align}\label{eq:alphadot}
     \dot{\alpha}(t,t_0)=(a-\dfrac{b^2}{r}p(t))\alpha(t,t_0),\end{align} and initial condition $\alpha(t_0,t_0)=1$.

The ODEs for $p(t)$ and $\alpha(T,t)$ can be explicitly solved to obtain
\begin{equation}\label{alpha p(t)}
p(t)=\dfrac{p_1(q)-K(q)e^{-2\Delta(q)(T-t)}p_2(q)}{1-K(q)e^{-2\Delta(q)(T-t)}}, \qquad \alpha(T,t)= \dfrac{1-K(q)}{e^{\Delta(q)(T-t)}-K(q)e^{-\Delta(q)(T-t)}} ,
\end{equation}
where \begin{align} \label{eq:delta}
\Delta(q) = \sqrt{a^2 + b^2q/r},\end{align} 
$p_1(q) = (r/b^2)  (a + \Delta(q)),
p_2(q) = (r/b^2)  (a - \Delta(q)),
$ and 
\begin{align}\label{eq:K(q)}
K(q)  = (C - p_1(q)) / (C - p_2(q))\end{align}
with $|K(q)|<1$.

The optimal control described so far assumes a prescribed mean trajectory. Let $v$ denote the fraction of agents that end up in $W_1$ at time $T$. As shown in \citet{salhab2017dynamic}, this requires the unique mean trajectory to be
\begin{equation}\label{X mean}
    X(t;v)=R_1(t,0)X(0)+R_2(t)(vy_1+(1-v)y_2),
\end{equation}
where the functions $R_1$ and $R_2$ solve respectively the ODEs
\begin{equation}\label{R1}
    \begin{split}
            &\dot{R}_1(t,t_0)=(a-\frac{b^2}{r}p_m(t))R_1(t,t_0),
    \end{split}
\end{equation}
and 
\begin{equation}\label{R2}
        \dot{R}_2=(a-\frac{b^2}{r}p_m(t))R_2+\frac{b^2}{r}R_1(T,t)C,
\end{equation}
with $R_1(t_0,t_0)=1$, and $R_2(0)=0$.
Here $p_m$ is the solution of the `mean' Riccati equation 
\begin{equation*}\label{pv}
        \dot{p}_m=\frac{b^2}{r}p_m^2-2ap_m,
\end{equation*} with final condition $p_m(T)=C$. 

The optimal control and density pair $(u=u^*,f=f^*)$ satisfies the Fokker-Planck Eq. \ref{FP f*_v}. The standard MFG consistency condition requires that density evolution obtained from Eq. \ref{FP f*_v} reproduce the prescribed mean trajectory $X(t;v)$ used to compute the optimal control $u^*$ (using Eq. \ref{u*}), i.e., let $X^*(t;v,q)\triangleq$ $\bigintsss x f^*(x,t;v,q)dx$, then $X^*(t;v,q)=X(t;v)$.

A key result of \citet{salhab2019collective} establishes a one-to-one mapping between the dynamic MFG equilibrium $(u^*,f^*)$ that solves Eqs. \eqref{HJB} and \eqref{FP f*_v}, and the fixed points of a scalar-valued map, $G(v;q)$. This map is introduced and analyzed in the next section.

\section{Properties of the scalar map $G$}\label{sec:Gmap}
We write the self-consistency map as $G(\cdot;q):[0,1]\mapsto [0,1]$. For fixed $q$, it is given by the composition
$G(v;q)=G_1\circ G_2\circ G_3\circ G_4(v),$
where we suppress the dependence of the maps on $q$, see Fig. \ref{fig:Gmap_flow}.
The constituent maps are
\begin{equation}
\label{eq:maps}
\begin{aligned}
G_4 &: v \mapsto X(\cdot),\\
G_3 &: X(\cdot) \mapsto u^*(\cdot,\cdot),\\
G_2 &: u(\cdot,\cdot) \mapsto f(\cdot,\cdot),\\
G_1 &: f(\cdot,\cdot) \mapsto \tilde{v},
\end{aligned}
\end{equation}
where $G_4$ is defined by Eqs. (\ref{X mean}\textendash\ref{R2}). From properties of Riccati equations, $X\in C^\infty([0,T])$.  $G_3$ is defined by Eqs. (\ref{u*}\textendash\ref{eq:alphadot}), and it was established in \citet{salhab2019collective} that $u\in C^\infty([0,T)\times\mathbb R)$, and $|u(t,x)|\leq L(1+|x|)$. Thus, $u$ is smooth (except at terminal time) and grows at most linearly in $x$. $G_2$ is the solution map of Eq.\eqref{FP f*_v}, and \pg{$f\in C^{\infty}([0,T)\times\mathbb R)$} with finite second moments, by standard Fokker-Planck theory for the smooth drifts with at most linear in $x$ growth \citep{bogachev2022fokker}. Finally, $G_1(f)=\bigintssss_{-\infty}^{(y_1+y_2)/2}f(T,x)dx$. 

\begin{figure}[hbt!]
    \centering
    \includegraphics[width=0.95\linewidth]{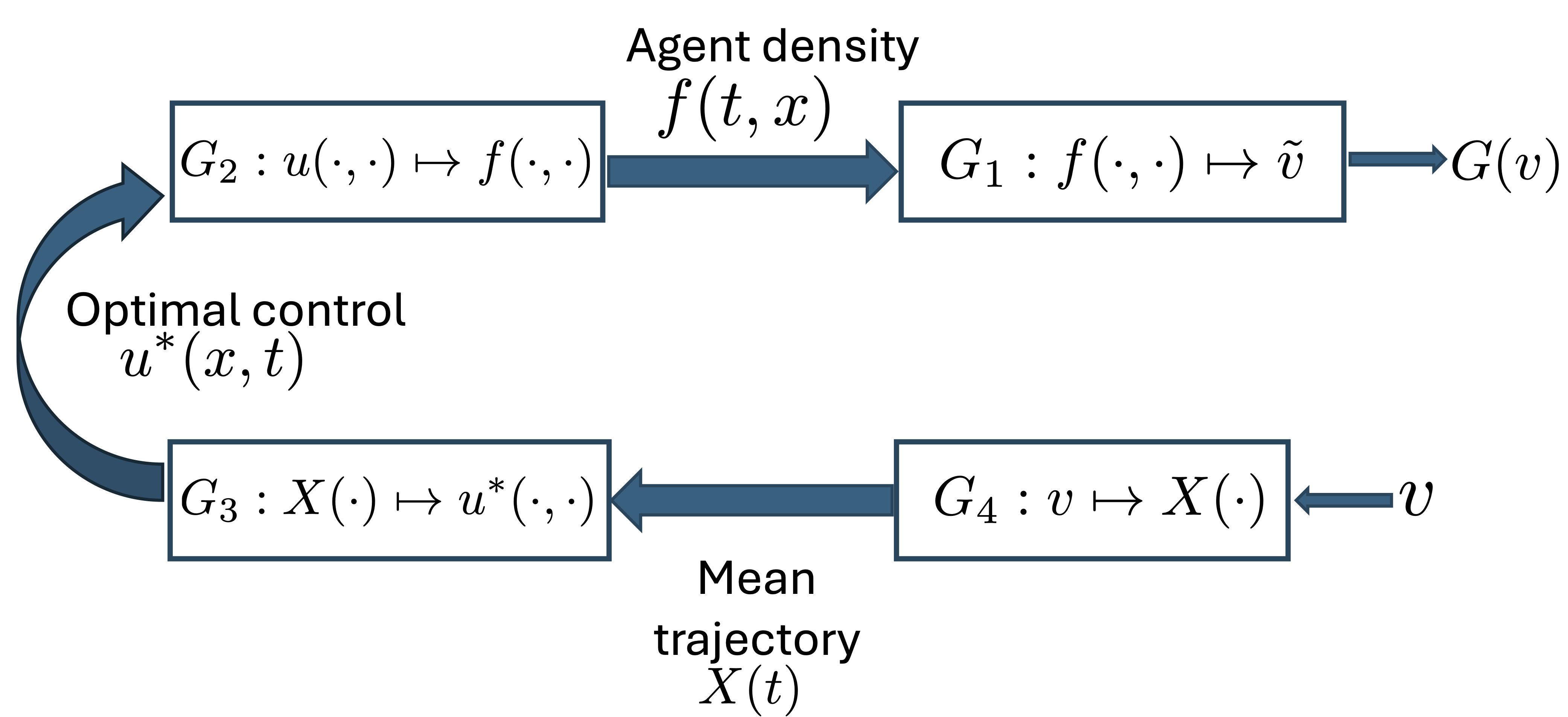}
    \caption{\footnotesize{The scalar map $G$ as a composition of four maps.}}
    \label{fig:Gmap_flow}
\end{figure}

For the rest of the paper, we focus on the case with symmetric destination choices, i.e., $y_2=-y_1$, a symmetric initial condition given by the unit zero-mean Gaussian, and positive drift $a>0$.

\subsection{Fixed point iterations and numerical results}
\begin{figure}[hbt!]
    \centering
    \includegraphics[width=0.95\linewidth]{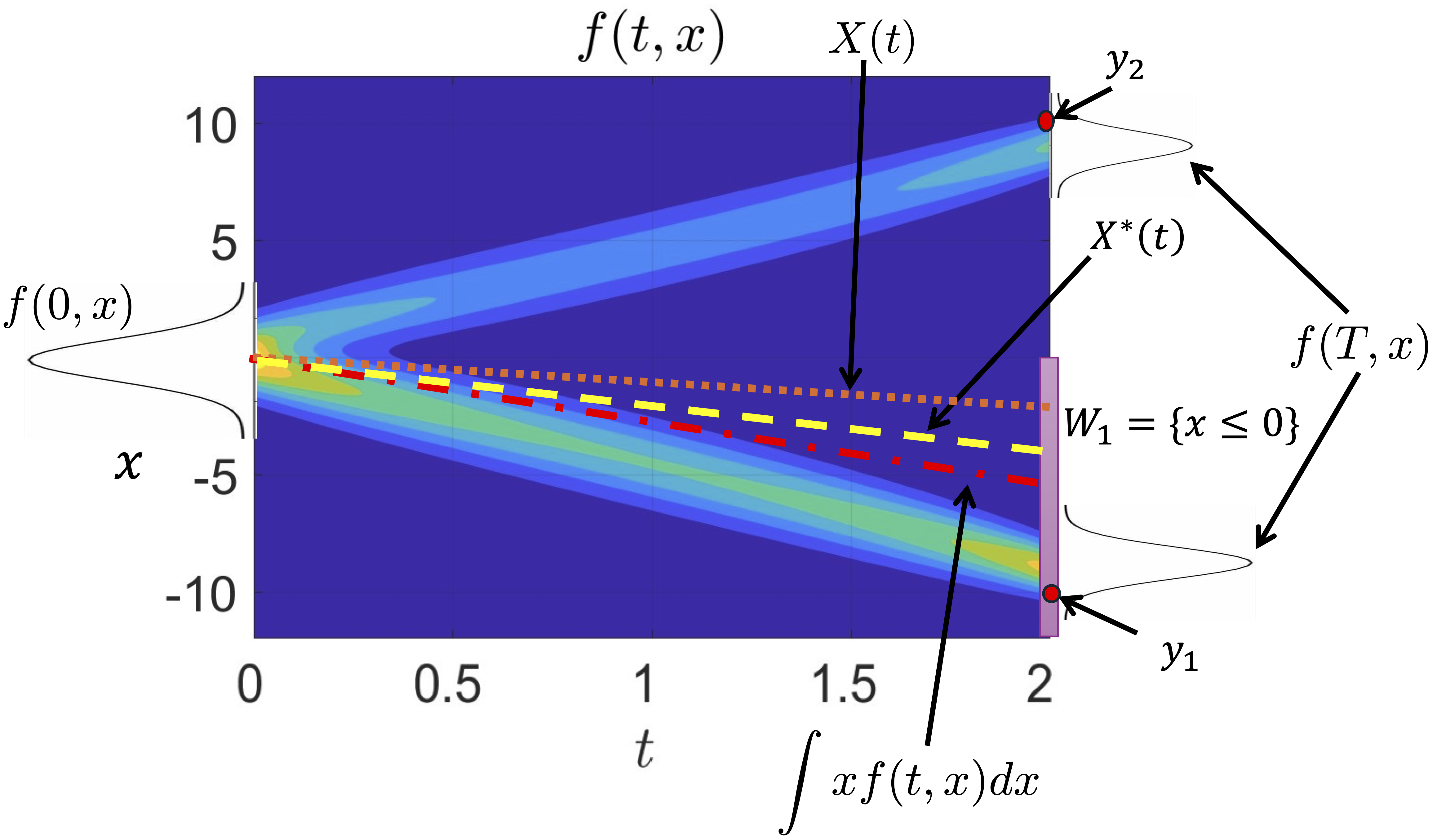}
    \caption{\footnotesize{An iteration of the map $G$ at $q=15$, with $y_1=-10$ and $y_2=10$ (red dots) the two choices at the final time. The parameters $a, b, r, C, \sigma$ and $T$ are respectively $0.1, 0.2, 5, 500, 1.5$ and $2$. With a randomly chosen $v$, we compute the mean $X(t)=G_4(v)$, shown in orange. The optimal control with this prescribed mean is $u^*(t,x)=G_3(X)$. The background colormap represents the density $f^*(t,x)=G_2(u^*$) obtained by propagating the initial zero-mean Gaussian, and the resulting mean $\int x f(t,x;v)dx$ is shown in red. Since  $\int x f(t,x;v)dx\neq X$, the chosen value of $v$ is not a fixed point, i.e., the first iterate $\tilde{v}=G(v)=G_1(f)\neq v$. Also shown is the fixed-point mean $\int x f(t,x;v^*)dx$ in yellow, corresponding to an asymmetric fixed point $v^*_-=0.24$.}}
    \label{map_itr_k}
\end{figure}
\pg{A fixed-point iteration of the map $G$, starting from an arbitrarily chosen $v$ is shown in Fig. \ref{map_itr_k} for $q=15$. We employ fixed-point iterations and a bisection method \citep{salhab2017dynamic} to compute the other two fixed points $v^*_{\pm} = \{0.76, 0.24\}$ at this value of social effect. The mean $X^*$ corresponding to the \pg{asymmetric fixed point $v^*_-=0.24$} is also shown. In Fig. \ref{fig:numerics}, we present further numerical results corresponding to that fixed point.}

\begin{figure}[hbt!]
\centering
\begin{tabular}{c c} \\ \hspace{-0.0in}
\includegraphics*[width=0.49\linewidth]{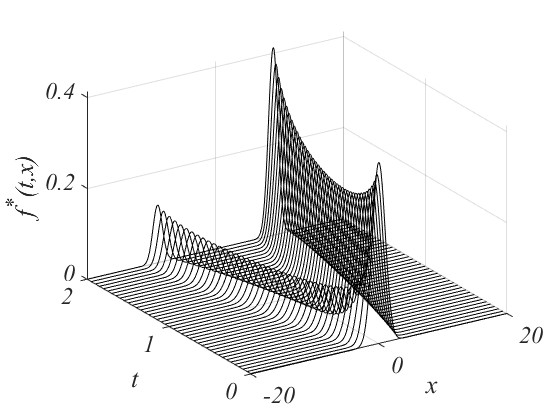} &\hspace{-0.4in} \includegraphics*[width=0.49\linewidth]{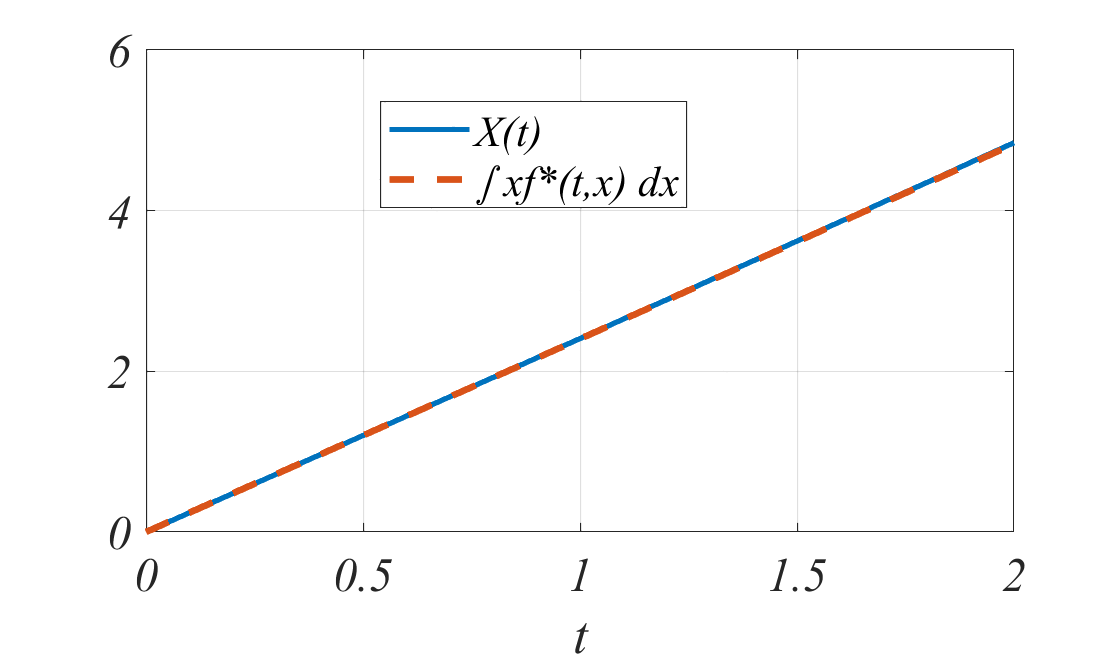} 
\\ (a) & (b)\\ \hspace{-0.2in}
\includegraphics*[width=0.49\linewidth]{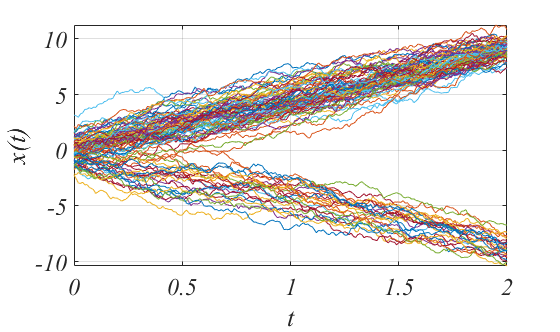} & \hspace{-0.2in}
\includegraphics*[width=0.49\linewidth]{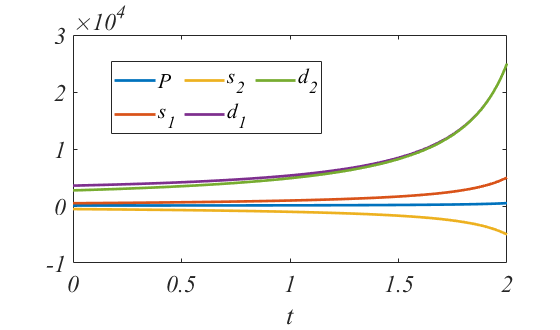} \\ (c) & (d)\\
\end{tabular}

\caption{ \footnotesize{Numerical results for the fixed point $v^*_-=0.24$ at $q=15$, corresponding to MFG solution pair $(f^*,u^*)$.
 a) The density evolution $f^*(t,x)$  b) Prescribed mean $X(t)$ from Eq.~\eqref{X mean} overlaid with the mean $\int x f^*(t,x)dx$ obtained from the corresponding density c) Trajectories of $N=401$ of agents using optimal control $u^*$, and d) Corresponding Riccati parameters.}}
\label{fig:numerics}
\end{figure}

\subsubsection{Numerical results on symmetry breaking via a pitchfork bifurcation}
By repeating the fixed-point calculations for a range of values of the social effect parameter $q$, \pg{we compute the symmetric analogue} of the asymmetric pitchfork bifurcation reported in \citet{salhab2019collective}, for our setting of symmetric initial condition and destinations. We find that for $q<q_c=13.78$, the system has only the symmetric fixed point $v^*=0.5$, which is stable. At $q=q_c$, this fixed-point becomes marginally stable (i.e., $G'(0.5)\triangleq\dfrac{\partial G}{\partial v}|_{0.5}=1$), and for $q>q_c$ it is unstable. Two asymmetric, stable fixed-points $(v^*_+,v^*_-)$ are born at $q=q_c$, and they are related by $v^*_+=1-v^*_-$. The bifurcation diagram is shown in Fig. \ref{fig:bif}. Next, we establish some properties of the map $G$.

\begin{figure}[hbt!]
    \centering
    \includegraphics[width=0.99\linewidth]{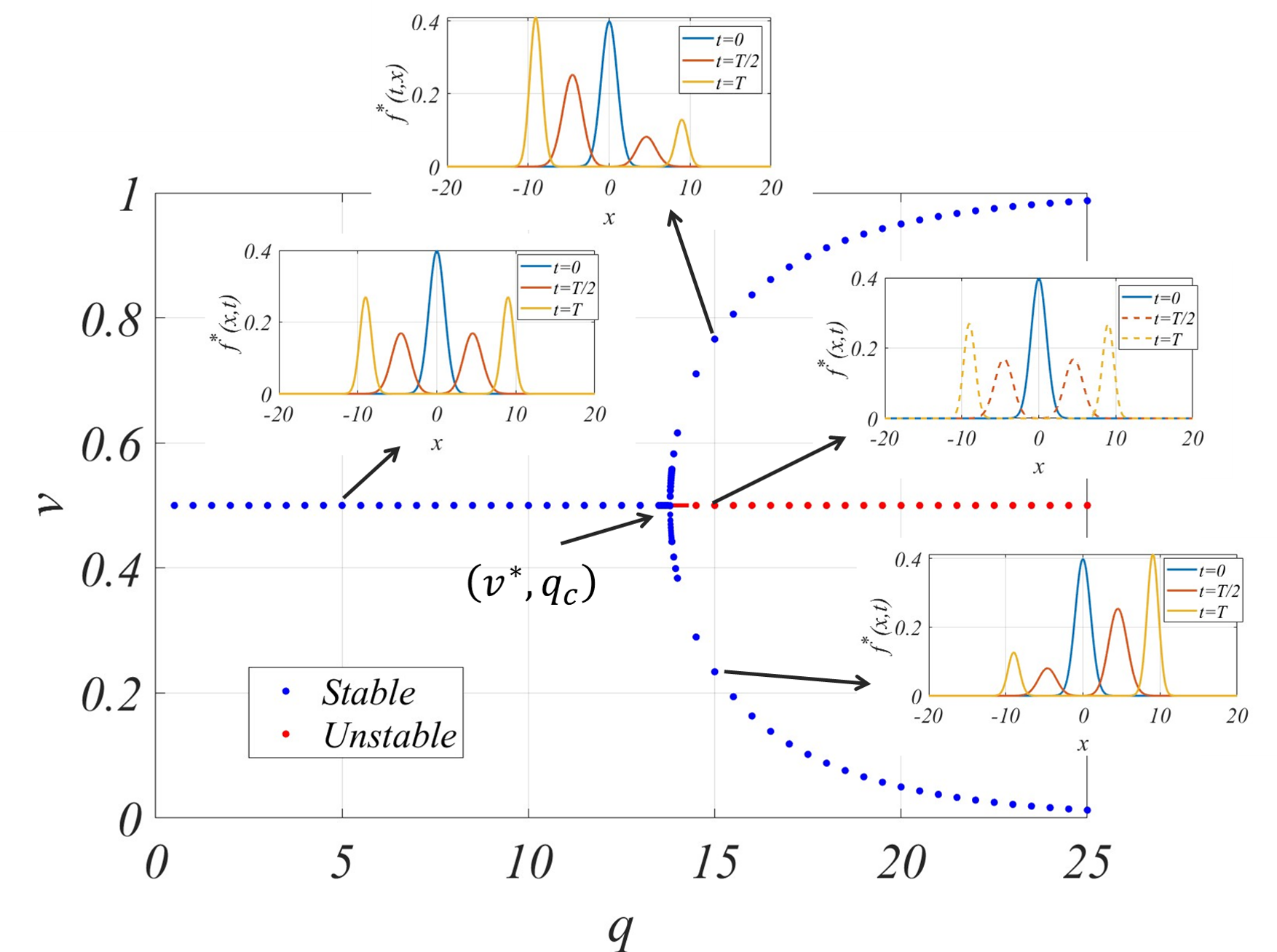}
    \caption{\footnotesize{Numerically computed bifurcation diagram for the map $G$. Each inset plot shows the initial, intermediate and final agent densities for the corresponding $v$.}}
    \label{fig:bif}
\end{figure}

\subsection{Odd Symmetry of $G$}
\label{sec:odd_G}
\begin{theorem} Suppose the initial density is the unit Gaussian with mean $X(0)=0$, and the final destination choices are symmetric about the origin, $y_2=-y_1$. Then, $G$ is odd symmetric about $v=0.5$, i.e., $G(1-v)=1-G(v)$. \end{theorem}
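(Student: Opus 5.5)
The plan is to follow the composition $G=G_1\circ G_2\circ G_3\circ G_4$ and show that each stage is equivariant under the reflection $\mathcal R: x\mapsto -x$. Write $y_1=-y$, $y_2=y$ with $y>0$, so the threshold $(y_1+y_2)/2=0$ and $W_1=(-\infty,0)$, $W_2=[0,\infty)$. The goal is to show that replacing $v$ by $1-v$ produces a mirrored density, $f(t,x;1-v)=f(t,-x;v)$. Then
\[
G(1-v)=\int_{-\infty}^0 f(T,x;1-v)\,dx=\int_0^\infty f(T,x;v)\,dx=1-G(v),
\]
since $f(T,\cdot;v)$ is a probability density. The boundary point $x=0$ carries no mass because $f(T,\cdot)$ is absolutely continuous.

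\textbf{Step 1 ($G_4$).} With $X(0)=0$, Eq. \eqref{X mean} gives $X(t;v)=R_2(t)\big(-vy+(1-v)y\big)=R_2(t)(1-2v)y$. Hence $X(t;1-v)=-X(t;v)$.

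\textbf{Step 2 ($G_3$).} Fix a mean trajectory $X$ and compare it with $-X$.
\begin{itemize}
\item The Riccati function $p$ and the fundamental solution $\alpha$ do not depend on $X$ or on $j$.
\item The linear ODE for $s_j$ in \eqref{hjb riccati2}, with terminal data $s_j(T)=-Cy_j$, shows that swapping $X\to -X$ and $y_1\leftrightarrow y_2$ sends $s_1\to -s_2$ and $s_2\to -s_1$.
\item The $d_j$ equation involves only $s_j^2$ and $X^2$, so $d_1$ and $d_2$ are exchanged.
\item Therefore $V_1(t,x)\leftrightarrow V_2(t,-x)$ and $u_1(t,x)\leftrightarrow -u_2(t,-x)$.
\item From \eqref{Xj Sigma}, $X_1(t,x)\to -X_2(t,-x)$ while $\Sigma$ is unchanged. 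Since $W_1=-W_2$ up to a null set, \eqref{eq:gj} gives $g_1(t,x)\leftrightarrow g_2(t,-x)$.
\item Consequently $\gamma_1(t,x)\leftrightarrow\gamma_2(t,-x)$, and by \eqref{u*} we get $u^*_{-X}(t,x)=-u^*_X(t,-x)$.
\end{itemize}
Combined with Step 1, this gives $u^*(t,x;1-v)=-u^*(t,-x;v)$.

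\textbf{Step 3 ($G_2$).} Set $\tilde f(t,x)=f(t,-x;v)$. A direct substitution into \eqref{FP f*_v} shows that $\tilde f$ solves the Fokker–Planck equation with drift $-(a(-x)+bu^*(t,-x;v))=ax+bu^*(t,x;1-v)$. The initial condition is $\tilde f(0,\cdot)=f_0$, because the unit Gaussian is even. Uniqueness of solutions of the Fokker–Planck equation with smooth, linearly growing drift, in the class of densities with finite second moments cited in Section \ref{sec:Gmap}, then yields $\tilde f=f(\cdot;1-v)$. Step 4 ($G_1$) is the integral computation displayed above.

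The main obstacle is the bookkeeping in Step 2, since the $s_j$, $d_j$, $g_j$ and the softmax weights must all be tracked carefully under the reflection. I would handle it by proving the single lemma $V^{(-X)}(t,x)=V^{(X)}(t,-x)$ directly from the HJB equation \eqref{HJB}. Both the PDE and the terminal condition $\min_j\frac C2(x-y_j)^2$ are invariant under $x\to -x$, $X\to -X$, so this identity holds by uniqueness of the value function. Differentiating in $x$ then gives $u^*_{-X}(t,x)=-u^*_X(t,-x)$ without expanding the $\gamma_j$ at all.
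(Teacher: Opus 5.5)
Your proposal is correct and follows essentially the same route as the paper: you negate the mean via $v\mapsto 1-v$ and track $s_j,d_j,V_j,X_j,g_j,\gamma_j,u_j$ under $x\mapsto -x$ with label exchange to get $u^*(t,x;1-v)=-u^*(t,-x;v)$. You then use equivariance of the Fokker--Planck equation, with the uniqueness step made explicit, and a half-line change of variables. Your optional shortcut via the $x\mapsto -x$, $X\mapsto -X$ invariance of the HJB equation also works and avoids the softmax bookkeeping, but it needs a uniqueness result for the HJB with the nonsmooth $\min$ terminal data, which the paper's explicit-formula route does not.
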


\begin{proof}
\textbf{Step 1: Symmetry of the mean $X(t;v)$}

With $y_1=-y_2$, and $X(0)=0$, Eq.\eqref{X mean} yields
\begin{equation}\label{X sym}
X(t;v)=R_2(t)\big(vy_1-(1-v)y_1\big)
      =R_2(t)(2v-1)y_1,
\end{equation}
and 
\begin{equation}
X(t;1-v)=R_2(t)\big((1-v)y_1+v y_2\big)
          =R_2(t)\big((1-v)y_1-vy_1\big)
          =R_2(t)(1-2v)y_1=-X(t;v). \label{eq:X_sym}
\end{equation}

\textbf{Step 2: Symmetry of the Riccati parameters \(s_j\) and \(d_j\)}

Recalling Eq.\eqref{hjb riccati2}, and using Eq. \ref{eq:X_sym}, we have
\begin{equation*}
\dot{s}_1(t;1-v)
=\left(\frac{b^2}{r}p(t)-a\right)s_1(t;1-v)-qX(t;v),
\end{equation*} 
and
\begin{equation*}
\dot{s}_2(t;v)
=\left(\frac{b^2}{r}p(t)-a\right)s_2(t;v)+qX(t;v).
\end{equation*}
The terminal conditions satisfy the relation $s_1(T;1-v)=-Cy_1=-s_2(T;v)$. This implies that $s_1(t;1-v)+s_2(t;v)$ solves a homogeneous linear ODE with zero terminal value, and hence, $s_1(t;1-v)=-s_2(t;v)$. Similar arguments can be employed to show that $d_1(t;1-v)=d_2(t;v)$.

\textbf{\pg{Step 3: Symmetry under action $v\rightarrow 1-v$ and $x\rightarrow -x$ with label exchange}}

For an arbitrary function $\mathcal{F}(t,x;v)$, we define the group action $\sigma \mathcal{F}(t,x;v):=\mathcal{F}(t,-x;1-v)$. The results of steps 1 and 2, in conjunction with definitions of Section \ref{subsec:map} can be used to conclude that $\sigma F_i=F_{3-i}$ for $V_i,g_i,\gamma_i$, while $\sigma F_i=-F_{3-i}$ for $u_i$ and $X_i$, for $i=1,2$.

\textbf{Step 4: Even symmetry of $f^*(t,x)$ under $\sigma$}

 We say that $\mathcal{F}$ is even under $\sigma$ if $\sigma\mathcal{F}=\mathcal{F}$, and odd if $\sigma\mathcal{F}=-\mathcal{F}$. The results in step 3 imply that $u^*$ is odd. A Fokker-Planck equation of the form given in Eq. \ref{FP f*_v} is equivariant w.r.t. $\sigma$ if the drift is \emph{odd} under $\sigma$. From step 3 and oddness of the linear term, the total drift $ax+bu^*$ is indeed odd. Since the initial condition is even under $\sigma$, i.e., $f^*(0,-x;1-v)=f^*(0,x;v)$, equivariance implies that $f^*(t,-x;1-v)=f^*(t,x;v)$ for all $t>0$. Using this relationship, and integrating over the positive reals,

\begin{equation*}
\int_0^\infty f^*(T,-x;1-v)\,dx
=
\int_0^\infty f^*(T,x;v)\,dx.
\end{equation*}
By a change of variable $x\rightarrow -x$ on the l.h.s, we obtain the desired result $G(1-v)=1-G(v)$.
\end{proof}
\subsection{Continuous dependence of $G$ and $G'$ on $q$}\label{sec:Gcont}
\pg{The continuity of $G$ in $q$ follows from continuity of the constituent maps. $G_4$ is independent of $q$, while the continuity of $G_3$ follows from standard ODE results on continuous dependence on parameters \citep{coddington1956theory}. 
As a result, the drift in the Fokker-Planck equation \ref{FP f*_v} depends continuously on $q$. The continuity of $G_1$ and $G_2$ then follows from standard results \citep{bogachev2016distances,bogachev2022fokker} on continuous parametric dependence of solutions of parabolic PDEs with drifts that have at most linear growth in $x$.

For every finite $Q$, the bounds in the arguments justifying chain rule in Sec. \ref{sec:linv*} and Appendix \ref{sec:a2} can be chosen uniformly for $q\in[0,Q]$, which yields continuity of $G'(v^*;q)$ in $q$ using continuity of $G$ and Taylor's theorem.}

\section{The Symmetric Fixed Point and the Linearized Map}
\label{sec:linearmap}
The goal of this section is to derive the linearized map $\hat{v}\mapsto G'(v^*)\hat{v}$ around the symmetric fixed point. We begin by analyzing the map at the fixed point.

\subsection{Properties of the Nash equilibrium corresponding to the symmetric fixed point}
Recall that for $v^*=0.5$, the mean \(X^*(t)=0\) from Eq.\eqref{X sym}. From results of Section \ref{sec:odd_G}, we obtain $s_1(t)=-s_2(t)$ where $s_1(t)=-y_1C\alpha(T,t)>0$ and $d_1(t)=d_2(t)$. This simplifies the expression for the weight in Eq. \ref{eq:gamma0} to
\begin{equation}\label{eq:gamma}
    \gamma_1(t,x)=\dfrac{g_1(t,x)}{g_1(t,x)+g_2(t,x)e^{2\eta s_1x}}.
\end{equation} We will also need its gradient

\begin{multline}\label{eq:gamma_x}
    \partial_x \gamma_1(t,x)=\\-2\gamma_1(t,x)\big(1-\gamma_1(t,x)\big)\bigg(\eta s_1(t)+\frac{\alpha(T,t)}{\pg{2}\sqrt{2\pi\Sigma(t)}}
\left[
\frac{\exp\left(-\dfrac{X_1(t,x)^2}{2\Sigma(t)}\right)}
{g_1(t,x)}
+
\frac{\exp\left(-\dfrac{X_2(t,x)^2}{2\Sigma(t)}\right)}
{g_2(t,x)}
\right]\bigg)\\\leq 0.
\end{multline}

The expression for the optimal control in Eq. \eqref{u*} reduces to
\begin{equation}\label{u^* sym}
u^*(t,x)=-\frac{b}{r}(p(t)x-s_1(t))-2\gamma_1(t,x) \frac{b}{r} s_1(t),
\end{equation}
with its gradient
\begin{equation*}
    \partial_xu^*(t,x)=-\frac{b}{r}p(t)-2\partial_x\gamma_1(t,x) \frac{b}{r} s_1(t).
\end{equation*}

\begin{remark}\label{rem:gamma1}
    While both $\partial_x\gamma_1$ and $\partial_xu^*$ are both uniformly bounded on each interval $[0,T_0]$ for $T_0<T$, the analysis of Eq. \ref{eq:gamma_x} reveals that they blow up $\sim O(\dfrac{1}{\sqrt{T-t}})$ at $x=0$ as $t\rightarrow T$.
\end{remark}

\subsection{Linearization around $v^*=0.5$}
\pg{\label{sec:linv*}}
Now we obtain a characterization of the linear map $\hat{v}\mapsto G'(v^*)\hat{v}$, evaluated at the symmetric solution $v^*=0.5$. From Eq. \ref{eq:maps}, 
\begin{equation*}
    G'(v^*)=DG_1|_{f^*}\underbrace{DG_2|_{u^*}\underbrace{DG_3|_{X^*}\underbrace{G'_4|_{v^*}}_{\hat{X}}}_{\hat{u}}}_{\hat{f}}.
\end{equation*}
We use $\hat .$  to denote linear responses of the maps $G_2$ through $G_4$ at the fixed point. Hence, using Eq. \ref{X sym}, we define
\begin{equation}\label{xh}
    \hat{X}(t;v^*)\triangleq G'_4|_{v^*}=2y_1R_2(t)=2y_1\dfrac{Cb^2e^{aT}}{2ar+Cb^2(e^{\pg{2a}T}-1)}(e^{at}-e^{-at}),
\end{equation} 

where we have used the explicit solution of $R_2(t)$ from Eq.\eqref{R2} with $R_2(0)=0$.

Using Eq. \ref{u*},
\begin{equation*}\label{uhat}
\hat u(t,x;v^*,q) \triangleq DG_3|_{X^*}\hat{X}= \hat{u}_2+\hat \gamma_1(u_1-u_2)+ \gamma_1(\hat u_1-\hat u_2),
\end{equation*}
where we have used $\gamma_2=1-\gamma_1$. Next,

\begin{equation*}\label{gamma hat}
\hat\gamma_1
=
\gamma_1\big(1-\gamma_1\big)
\left[
-\eta\big(\hat V_1-\hat V_2\big)
+\left(\frac{\hat g_1}{g_1}-\frac{\hat g_2}{g_2}\right)
\right],
\end{equation*}
where
\begin{equation}\label{Vhat uhat}
    \begin{split}
        &\hat V_j=\hat{s}_jx+\hat{d}_j,\qquad \hat u_j=-\frac{b}{r}\hat{s}_j,
    \end{split}
\end{equation}
and $\hat{s}_j $ and $\hat{d}_j$ solve the following linearized ODEs (see Eqs. \ref{hjb riccati2}),
\begin{equation}\label{shat dhat}
    \begin{split}
        & \dot{\hat{s}}_j=(\frac{b^2}{r}p-a)\hat{s}_j+q\hat{X} ,\qquad\dot{\hat{d}}_j= \frac{b^2}{r}s_j\hat{s}_j,
    \end{split}
\end{equation}
with the final conditions $\hat{s}_j(T)=0$ and $\hat{d}_j(T)=0$, respectively. This yields
\begin{equation}\label{sh}
   \begin{split}
        \hat{s}:=\hat{s}_1(t)=\hat{s}_2(t)=-\int_t^T q\hat{X}(\tau;v^*)\alpha(\tau,t)d\tau,
   \end{split}
\end{equation}
 Using $\alpha(T,\tau)>0$, $y_1<0$, and $a>0$, we conclude from Eqs. (\ref{xh},\ref{sh}) that $\hat X(t)\le0$, and $\hat{s}(t)\ge0$. From \eqref{sh}, $\hat{u}_1=\hat u_2$, and hence,
\begin{equation*}\label{uhat1}
\hat u
=
\hat u_1
+\eta\,\gamma_1\big(\gamma_1-1\big)\big(u_1-u_2\big)\big(\hat V_1-\hat V_2\big)
+\gamma_1\big(1-\gamma_1\big)\big(u_1-u_2\big)
\left(\frac{\hat g_1}{g_1}-\frac{\hat g_2}{g_2}\right),
\end{equation*}
where 
\begin{equation*}\label{u1u2}
u_1-u_2=-2\frac{b}{r}s_1(t).
\end{equation*}
Eq.\eqref{hjb riccati2} combined with final condition $s_1(T)=-y_1C$ yields
\begin{equation}
    \label{s_1(t)}
    s_1(t)=-y_1C\alpha(T,t)>0.
\end{equation}
From  Eqs. (\ref{Vhat uhat},\ref{shat dhat}),
\begin{equation*}
    \hat V_1(t)-\hat V_2(t)=\hat{d}_1(t)-\hat{d}_2(t)=\int_t^T 2y_1C\alpha(T,\tau)\frac{b^2}{r}\hat{s}(\tau)d\tau\le0.
\end{equation*} 
Next, from Eq.\eqref{eq:gj}, we get
\begin{equation*}
\hat g_j
=\frac{1}{\sqrt{2\pi \Sigma(t)}} \
\int_{W_j}
\hat X_j \frac{y - X_j}{\Sigma(t)}
\exp\!\left(-\frac{(y - X_j)^2}{2\Sigma(t)}\right)
  \, dy.
\end{equation*}
where \( \hat X_g(t)\triangleq \hat X_j(t) =-\int_t^T \alpha(T,\tau)\frac{b^2}{r}\hat s(\tau)\,d\tau\le0\) from Eq.\eqref{Xj Sigma}. It follows that

\begin{equation*} \label{g1hat g2hat}
    \hat g_1=-\frac{\hat X_g(t)}{\sqrt{2\pi\Sigma(t)}}
\exp\left(-\frac{X_1^2}{2\Sigma(t)}\right)\;\text{and}\; \hat g_2=\frac{\hat X_g(t)}{\sqrt{2\pi\Sigma(t)}}
\exp\left(-\frac{X_2^2}{2\Sigma(t)}\right).
\end{equation*} 

Hence,
\begin{multline}\label{uhat2}
\hat u(t,x;v^*,q)
=
-\frac{b}{r}\hat s(t)
+2\eta\frac{b}{r}s_1(t)\gamma_1(t,x)\big(1-\gamma_1(t,x)\big)\big(\hat V_1(t)-\hat V_2(t)\big)\\
+2\frac{b}{r}s_1(t)\gamma_1(t,x)\big(1-\gamma_1(t,x)\big)
\frac{\hat X_g(t)}{\sqrt{2\pi\Sigma(t)}}
\left[
\frac{\exp\left(-\dfrac{X_1(t,x)^2}{2\Sigma(t)}\right)}
{g_1(t,x)}
+
\frac{\exp\left(-\dfrac{X_2(t,x)^2}{2\Sigma(t)}\right)}
{g_2(t,x)}
\right].
\end{multline}
So, $\hat{u}(t,x;v^*,q)$ is the sum of three negative terms.
\begin{remark}\label{rem:gammahat1}
    From Eq. \ref{uhat2}, one can show that $\partial_x\hat{u}$ stays bounded as $t\rightarrow T$ at $x=0$, unlike $\partial_xu^*$. Hence, $\partial_x\hat{u}$ is uniformly bounded on $[0,T]\times \mathbb{R}$.
\end{remark}
The proof of the following Lemma is provided in the Appendix \ref{sec:a2}. 
\pg{\begin{lemma} Let $U_{T}$ denote the space of controls $u(t,x)$
$$
U_{T}
=
\left\{u:[0,T]\times \mathbb{R} \rightarrow \mathbb{R}:
u\in C^{\infty}([0,T)\times \mathbb{R}), u(T,.)=0,
\sup_{(t,x)} \frac{|u(t,x)|}{1+|x|}<\infty
\right\},
$$
equipped with the norm
$
\|u\|_{U_{T}}
=
\sup_{(t,x)} \dfrac{|u(t,x)|}{1+|x|}.
$ Then the map $G_2:U_{T}\mapsto C([0,T],L^1)$ is Fréchet differentiable at $u^*(t,x;v^*,q)$, and the derivative $\hat{f}^w(t,x;v^*,q) \triangleq DG_2|_{u^*}w$ satisfies the following PDE,
\begin{equation}\label{FP fhat}
    \partial_t\hat{f}^w=\frac{\sigma^2}{2}\,\partial_{xx}\hat{f}^w
-\partial_x\Bigl((ax+bu^*(t,x;v^*,q))\,\hat{f}^w +bw\,f^*(t,x;v^*,q)\Bigr), 
\end{equation}
for every $w\in U_T$, and $0< t< T$ with $\hat f^w(0,.)=0.$  \label{lem:fhat}
\end{lemma}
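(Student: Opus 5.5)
I will prove Fréchet differentiability directly, by writing the equation for the remainder and estimating it in $C([0,T],L^1)$. Fix $w\in U_T$ with $\|w\|_{U_T}$ small. Let $f^{w}=G_2(u^*+w)$ be the solution of \eqref{FP f*_v} with drift $ax+b(u^*+w)$ and the Gaussian initial datum $f_0$, and let $\hat f^w$ be the solution of the linear problem \eqref{FP fhat}. Set $\rho = f^w - f^* - \hat f^w$. Subtracting the three equations gives
\begin{equation*}
\partial_t \rho - \frac{\sigma^2}{2}\partial_{xx}\rho + \partial_x\big((ax+b(u^*+w))\rho\big) = -\partial_x\big(b\,w\,\hat f^w\big), \qquad \rho(0,\cdot)=0 .
\end{equation*}
The plan has three steps. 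First, show that \eqref{FP fhat} is well posed and that $\sup_t\|\hat f^w(t)\|_{L^1}\le C\|w\|_{U_T}$, which makes $w\mapsto \hat f^w$ a bounded linear operator. Second, show that the remainder is quadratic: $\sup_t\|\rho(t)\|_{L^1}\le C\|w\|_{U_T}^2$.

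Both steps rest on the same linear estimate. For a Fokker--Planck operator with drift $\beta$ and a divergence-form source $\partial_x h$, one has
\begin{equation*}
\|\phi(t)\|_{L^1}\le C\int_0^t (t-s)^{-1/2}\|h(s)\|_{L^1}\,ds ,
\end{equation*}
where $\phi$ is the solution started from zero. I will obtain this from Duhamel's formula, using the fundamental solution $\Gamma(t,x;s,y)$ of the operator with drift $\beta=ax+bu$. Integrating by parts in $y$ moves the derivative onto $\Gamma$, and I need the gradient bound $\int|\partial_y\Gamma(t,x;s,y)|\,dx\le C(t-s)^{-1/2}$. On $[0,T_0]$ with $T_0<T$, $\partial_x u^*$ is bounded (Remark~\ref{rem:gamma1}), so this is the standard parabolic estimate for drifts that are Lipschitz with linear growth. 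Because the source is measured in $L^1$, the linear growth of $w$ has to be absorbed by moments. So I will also propagate the weighted norms $\int(1+|x|)|\cdot|\,dx$, using that $f^*$ and $f^w$ have finite second moments uniformly in $t$. For the first step this gives $\|w f^*\|_{L^1}\le \|w\|_{U_T}\int(1+|x|)f^*\,dx$. For the second step I bound $\|w\hat f^w\|_{L^1}\le\|w\|_{U_T}\,\|(1+|x|)\hat f^w\|_{L^1}$, and then apply the weighted version of the first-step estimate to $\hat f^w$. Since $\int_0^t (t-s)^{-1/2}\,ds$ is finite, both bounds close.

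The main obstacle is the endpoint $t\to T$. There $\partial_x u^*$ blows up like $(T-t)^{-1/2}$ near $x=0$ (Remark~\ref{rem:gamma1}), so the Lipschitz constant of the drift is not uniform. My first attempt is to avoid needing the gradient bound on $\Gamma$ near $T$. The unweighted $L^1$ estimate only requires contraction of the homogeneous flow in $L^1$, and this holds for any drift because the equation preserves mass and positivity. The gradient bound is needed only to handle the source $\partial_x h$. For that I intend to use the fact that the blow-up is integrable, $\int^T(T-t)^{-1/2}\,dt<\infty$, together with a Gronwall-type argument on $\|\partial_x\phi\|$. An alternative is to establish the estimate on $[0,T_0]$, uniformly in $T_0<T$, and then extend to $[0,T]$ by continuity in $C([0,T],L^1)$. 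This extension uses the continuity of $f^*$ and $f^w$ at $T$ and the fact that $w(T,\cdot)=0$. Having both ingredients available keeps the linearization valid up to $t=T$.

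Finally, I will verify that \eqref{FP fhat} is satisfied by $\hat f^w$ in the weak (distributional) sense, and upgrade this to a classical solution on $0<t<T$ by parabolic regularity, since the drift and $f^*$ are $C^\infty$ there.
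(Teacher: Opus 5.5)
Your remainder equation is algebraically correct. The gap is the estimate everything rests on, namely $\int|\partial_y\Gamma(t,x;s,y)|\,dx\le C(t-s)^{-1/2}$ with $C$ independent of $t<T$ and of $w$: you never establish it, and in Step~2 the justification you give fails.

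In Step~2 you propagate $\rho$ with the \emph{perturbed} drift $ax+b(u^*+w)$. The constant in $\sup_t\|\rho(t)\|_{L^1}\le C\|w\|_{U_T}^2$ is therefore the kernel constant of that operator, and Fréchet differentiability needs it uniform over the ball $\|w\|_{U_T}\le\kappa$. A ``standard estimate for Lipschitz drifts'' depends on $\|\partial_x w\|_\infty$, which the norm $\sup|w|/(1+|x|)$ does not control. For example, $w=\epsilon(T-t)\sin(x^2)$ lies in $U_T$ with norm at most $\epsilon T$, yet $\partial_x w$ is unbounded for every $t<T$. You also cannot treat $bw$ as a bounded perturbation in a Duhamel expansion: it only has linear growth, so the weight $(1+|x|)$ escalates at each iteration. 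Your route would need a kernel bound for measurable drifts of linear growth whose constant depends only on the growth constant, and you do not supply one.

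The endpoint has a related problem. The Lipschitz constant of $u^*$ blows up at $T$ (Remark~\ref{rem:gamma1}), so ``uniformly in $T_0<T$'' is exactly what a Lipschitz-based estimate cannot give. A Gronwall argument on $\partial_x\phi$ would require differentiating the drift, which brings $\partial_x w$ back in.

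The missing idea is to work with one fixed operator and a kernel bound that sees only sup-norms of the drift; this is what the paper does.
\begin{itemize}
\item Write the remainder as $\partial_t\rho=\mathcal L_{v^*}\rho-\partial_x\bigl(bw\,\delta f\bigr)$ with $\delta f=f^w-f^*$.
\item First estimate $\delta f$ from $\partial_t\delta f=\mathcal L_{v^*}\delta f-\partial_x(bwf^w)$. The weighted ($m=1$) kernel bound gives $\|(1+|x|)\delta f(t)\|_{L^1}\le C\|w\|_{U_T}\sup_s\int(1+|x|)^2f^w(s,x)\,dx\le C(\kappa)\|w\|_{U_T}$. The source now involves only the probability density $f^w$, whose second moments are bounded uniformly over the ball.
\item The unweighted bound then gives $\|\rho(t)\|_{L^1}\le C\|w\|_{U_T}\int_0^t(t-s)^{-1/2}\|(1+|x|)\delta f(s)\|_{L^1}\,ds\le C(\kappa)\|w\|_{U_T}^2$.
\end{itemize}

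For the kernel bound of $\mathcal L_{v^*}$ itself, use the split $H_{v^*}=A(t)x+B(t,x)$ with $A$ bounded and $|B|\le l_B$. The paper invokes Theorem~1.2(ii) of Menozzi--Pesce--Zhang, whose constant depends only on these bounds and is therefore uniform up to $T$. Alternatively, a Duhamel expansion around the Ornstein--Uhlenbeck kernel $F_h$ closes using only $\|B\|_\infty$, because $\int_s^t(t-r)^{-1/2}(r-s)^{-1/2}\,dr<\infty$; the blow-up of $\partial_xu^*$ never enters. With bounds uniform on $[0,T_0]$ for all $T_0<T$, your extension to $t=T$ by continuity then goes through as in the paper.
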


Applying the above lemma to $w=\hat{u}$, \begin{equation}
   G'(v^*;q)=DG_1|_{f^*}\hat{f}=\int_{x\le0}\hat{f}(T,x;v^*,q)\,dx \label{eq:dgdv},
\end{equation} where we use the simplified notation $\hat{f}=\hat{f}^{\hat{u}}$.}

\pg{In the above equation we have used the chain rule which can be justified as follows. For fixed $q$, differentiation of the explicit expressions in Eqs. \ref{u*}–\ref{Xj Sigma}, together with standard Gaussian derivative bounds, gives a locally uniform bound on $\partial_v^2u^*$ in $U_T$, and hence differentiability of $v\rightarrow (G_3\circ G_4) (v)$ at $v^*$ with derivative $\hat{u}$.}

\section{Stability analysis of the symmetric equilibrium, and Emergence of asymmetric equilibria}\label{sec:stab}
In this section, we analyze the linear stability of the symmetric fixed point of map $G$ as the social effect $q$ is increased from zero. It is easy to verify from the results of Sec. \ref{sec:linearmap} that $G'(v^*=0.5;q=0)=0$. From continuity (see Sec. \ref{sec:Gcont}), $|G'(v^*=0.5;q)|<1$ for small enough $q$, and hence the symmetric fixed point $v^*=0.5$ is stable when the social effect is small enough. Now we are ready to state the main theorem of the paper.
\begin{theorem}\label{th_main}
    \pg{There exists $q_0>0$ such that the symmetric fixed point $v^*=0.5$ is an unstable fixed point of the map $G$ for $q>q_0$, i.e., $G'(v^*;q)>1$ for all $q>q_0$.}
\end{theorem}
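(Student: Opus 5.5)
Starting from Eq. \eqref{eq:dgdv}, the plan is to turn $G'(v^*;q)=\int_{x\le 0}\hat f(T,x)\,dx$ into a manifestly non-negative integral, and then bound that integral from below by a quantity that grows without bound in $q$.

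\textbf{Step 1 (duality).} Let $\phi(t,x)$ solve the backward Kolmogorov equation adjoint to the linearized Fokker--Planck equation \eqref{FP fhat}:
\[
\partial_t\phi+\tfrac{\sigma^2}{2}\partial_{xx}\phi+(ax+bu^*)\partial_x\phi=0,\qquad \phi(T,x)=\mathbf 1_{x\le 0}.
\]
Pair this with \eqref{FP fhat}, integrate by parts in $x$, and use $\hat f(0,\cdot)=0$. This gives
\[
G'(v^*;q)=\int_0^T\!\!\int_{\mathbb R} b\,\hat u\, f^*\,\partial_x\phi\,dx\,dt .
\]
By the Feynman--Kac formula, $\phi(t,x)=\mathbb P(x(T)\le 0\mid x(t)=x)$ under the optimal drift. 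The flow of a one-dimensional SDE preserves order, so $\phi$ is nonincreasing in $x$ and $\partial_x\phi\le 0$. Eq. \eqref{uhat2} shows $\hat u\le 0$, and $f^*\ge0$. The integrand is therefore non-negative.

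To justify the integration by parts near $t=T$, I would work on $[0,T-\epsilon]$ and let $\epsilon\to0$. This uses Remark \ref{rem:gammahat1} (boundedness of $\hat u$ and $\partial_x\hat u$), the Gaussian tails of $f^*$, and $\hat f\in C([0,T],L^1)$.

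\textbf{Step 2 (discard all but one term).} Since every term is non-negative, keep only the first term of \eqref{uhat2}, $-\frac{b}{r}\hat s(t)$. This gives
\[
G'(v^*;q)\ \ge\ \frac{b^2}{r}\int_0^T \hat s(t)\,\Big(-\int_{\mathbb R} f^*\partial_x\phi\,dx\Big)\,dt .
\]
The inner integral equals $\frac{d}{dh}\mathbb P(x(T)\le 0 \mid x(0)\sim f_0(\cdot-h))$ up to sign. More usefully, write it as the sensitivity of the terminal split to a uniform drift perturbation applied at time $t$. By symmetry $f^*$ is even, so $\phi$ is centered at $0$ and $-\partial_x\phi$ concentrates near $x=0$ where $f^*$ lives.

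I will aim for a lower bound $-\int f^*\partial_x\phi\,dx\ge c_0>0$ for $t$ in a fixed subinterval, say $[T/2,3T/4]$. The constant $c_0$ should be uniform in $q$, or at least decay more slowly than $\hat s$ grows. The reason to expect this: for times bounded away from $T$, a point at the origin reaches $x(T)\le0$ with probability $1/2$. A small shift changes this probability at a rate comparable to the density of $x(T)$ at $0$, divided by the fundamental solution factor.

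\textbf{Step 3 (growth of $\hat s$).} From \eqref{sh} and \eqref{xh}, $\hat s(t)=q\int_t^T|\hat X(\tau)|\alpha(\tau,t)\,d\tau$. Here $\hat X$ is independent of $q$ and strictly positive in magnitude on $(0,T]$. Using the explicit forms \eqref{alpha p(t)}, $\alpha(\tau,t)$ decays like $e^{-\Delta(q)(\tau-t)}$ with $\Delta(q)\sim\sqrt{b^2q/r}$. Hence $\hat s(t)\gtrsim q|\hat X(t)|/\Delta(q)\sim \sqrt q$ on $[T/2,3T/4]$, which diverges as $q\to\infty$.

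\textbf{Main obstacle.} The delicate point is the uniform-in-$q$ control of $-\int f^*\partial_x\phi$ in Step 2. As $q$ grows, $p(t)\approx p_1(q)\sim\sqrt q$ strongly contracts the state, since $\alpha(T,t)\to0$. The Gaussian factor $\alpha(T,t)/\sqrt{\Sigma}$ then also shrinks, so the sensitivity may decay in $q$ and compete with the $\sqrt q$ growth of $\hat s$. To handle this, I would use the explicit variance formula in \eqref{Xj Sigma} and the monotonicity of $\gamma_1$ in \eqref{eq:gamma_x}. I would compare $\phi$ with the probability computed under the linear drift $a-\frac{b^2}{r}p$ plus a bounded nonlinear part, via Girsanov or a comparison principle. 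The goal is a bound of order $\alpha(T,t)/\sqrt{\Sigma(t)}$, where the factors of $\sqrt q$ cancel to leave an $O(1)$ quantity.

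If this cancellation fails, the fallback is to also keep the third term of \eqref{uhat2}. That term carries the large factor $s_1\sim\alpha(T,t)$ times $\hat X_g/\sqrt{\Sigma}$, and near $t=T$ it is localized at $x=0$, where both $f^*$ and $-\partial_x\phi$ are maximal. Its contribution over the terminal layer $[T-\delta(q),T]$ may supply the divergence directly.

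Combining Steps 2 and 3 yields a lower bound $L(q)$, increasing in $q$, with $L(q)\to\infty$. With the continuity from Sec. \ref{sec:Gcont}, this gives $q_0$ such that $G'(v^*;q)>1$ for all $q>q_0$.
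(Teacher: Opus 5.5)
Your Step 1 and your decision to keep only the first term $-\frac{b}{r}\hat s$ of \eqref{uhat2} match the paper (there $\phi=\Psi=\gamma_1$ and $h=-\partial_x\Psi\ge 0$). The gap is Step 2. On a fixed window such as $[T/2,3T/4]$, the factor $-\int f^*\partial_x\phi\,dx$ is neither uniform in $q$ nor slowly decaying: it is exponentially small in $\sqrt q$, so this route cannot give a lower bound exceeding $1$. To see this, note that $h(t,\cdot)=-\partial_x\phi(t,\cdot)$ has unit mass. From \eqref{eq:gamma_x}, the quantities $\gamma_1e^{-X_1^2/2\Sigma}/g_1$ and $(1-\gamma_1)e^{-X_2^2/2\Sigma}/g_2$ are bounded by $2$ (split according to the signs of $X_1<X_2$). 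Hence $\|h(t,\cdot)\|_\infty\le C\,\alpha(T,t)\bigl(1+\Sigma(t)^{-1/2}\bigr)$. Once $-1\le K(q)\le 0$ we have $\alpha(T,t)\le 2e^{-\Delta(q)(T-t)}$, and $\Sigma(t)\ge c/\Delta(q)$ for $T-t\ge 1/\Delta(q)$. Therefore
\[
-\int_{\mathbb R} f^*\partial_x\phi\,dx\;\le\;\|h(t,\cdot)\|_\infty\;\le\; C\sqrt{\Delta(q)}\,e^{-\Delta(q)(T-t)},
\qquad \hat s(t)\le \frac{Cq}{\Delta(q)},
\]
so your Step 2 bound on $[T/2,3T/4]$ is $O\bigl(q^{3/4}e^{-\Delta(q)T/4}\bigr)\to 0$. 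Your heuristic goes wrong in two ways. First, the sensitivity of $\mathbb P(x(T)\le 0)$ to a shift at time $t$ carries the factor $\alpha(T,t)$ multiplied, not divided. Second, $\alpha(T,t)$ decays exponentially in $\Delta(q)(T-t)$, so no cancellation against $\Sigma^{-1/2}\sim\Delta(q)^{1/2}$ is possible. Equivalently, $h(t,\cdot)$ is spread over a width of order $\sqrt{\Sigma}/\alpha(T,t)$ rather than concentrated where $f^*$ lives. The integral over any $t\le T-c$ tends to $0$, so all growth must come from the terminal layer $T-t=O(1/\Delta(q))$.

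Your fallback points to that layer, but to the wrong term and without the estimates that make up the actual proof. In the layer, with $\tau=T-t\le 1/\Delta(q)$, one has $s_1=O(1)$, $|\hat X_g|\le Cq\tau^2$ and $\Sigma\ge c\tau$. So the third term of \eqref{uhat2} is $O(q\tau^{3/2})$, and against $\int f^*h\,dx\le\|h\|_\infty\le C\tau^{-1/2}$ it contributes only $O(q\Delta(q)^{-2})=O(1)$. It is the first term, $-\hat u\ge C_u q\tau$, that yields the divergent $q\Delta(q)^{-3/2}\sim q^{1/4}$. Your proposal is missing the core of the paper's argument: the window $[T-1/\Delta(q),T]\times[-\Delta(q)^{-1/2},\Delta(q)^{-1/2}]$, on which $\alpha\ge 1/(e+1)$, together with two lower bounds there.
\begin{itemize}
\item The concentration bound $f^*\ge C_f\sqrt{\Delta(q)}$. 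This uses Duhamel with the split drift $A(t)x+B$ and $\|B\|_\infty\le l_B$, the Nash-inequality bound $\|f^*\|_{L^2}\lesssim\Delta(q)^{1/4}$, and $\int_0^t\|\partial_\xi F_h\|_{L^2}\,ds\lesssim\Delta(q)^{-1/4}$.
\item The heat-kernel lower bound $h\ge C\tau^{-1/2}e^{-x^2/(C\tau)}$. This uses time reversal, the rescaling $x=e^{\Lambda}y$ to remove the unbounded linear drift, Aronson's estimate with frozen coefficients, and comparison using $B_x\ge 0$.
\end{itemize}
Without these, no lower bound exceeding $1$ follows.
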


The key idea enabling the proof of this theorem is derivation of an auxiliary ODE, as described next.

\subsection{A non-negative integral representation for $G'$} In this section, we derive an auxiliary ODE (in time) whose solution has a non-negative integral formulation and equals $G'(v^*;q)$ at $t=T$. Let $\mathcal{L}_{v^*}$ be the Fokker-Planck operator corresponding to optimal control $u^*(t,x;v^*,q)$, i.e.,
\begin{equation}\label{eq:L*}
\mathcal{L}_{v^*}\triangleq\frac{\sigma^2}{2}\partial_{xx}(\cdot)-\partial_x\!\big( (a x+b u^*(t,x;v^*,q))(\cdot)\big),
\end{equation}
and define $H_{v^*}(t,x):=a x+b u^*(t,x;v^*,q)$. 

Let $\Psi(t,x;v^*,q)$ be the weak solution in backward time of the following adjoint PDE 
\begin{align}
\partial_t\Psi=-\mathcal{L}^A_{v^*}(\Psi),\label{eq:Psidot}
\end{align}
with final condition $\Psi(T,x)=\mathbf{1}_{x\leq 0}$. Here, $\mathcal{L}^A_{v^*}=\dfrac{\sigma^2}{2}\partial_{xx}(.)+H_{v^*}(t,x)\partial_x(.)$ is the $L^2-$adjoint of the forward operator $\mathcal{L}_{v^*}$.

\begin{remark}
The existence of a unique, weak, bounded solution $\Psi\in L^{\infty}([0,T]\times \mathbb{R})$ of Eq. \ref{eq:Psidot} can be shown as follows.  We mollify the non-smooth terminal data \citep{evans2022partial}, and construct the corresponding classical bounded, smooth solution $\Psi_{\epsilon}$. Since the drift is at most linear in $x$, maximum principle yields that $0\leq \Psi_{\epsilon}\leq 1$. Using standard interior regularity results \citep{friedman2008partial} and passing to the limit $\epsilon\rightarrow 0$ yields the weak solution with the given non-smooth terminal data. Uniqueness follows from comparison principle of bounded solutions \citep{friedman2008partial}.\label{rem:Psi}
\end{remark}

\begin{lemma}\label{lemma:gamma1}
The weight $\gamma_1(x,t)$ is a solution of the adjoint PDE Eq. \ref{eq:Psidot}, and therefore, $\Psi(t,x)=\gamma_1(t,x).$
\end{lemma}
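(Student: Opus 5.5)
The plan is to show directly that $\gamma_1$ satisfies the backward Kolmogorov equation $\partial_t\gamma_1+\frac{\sigma^2}{2}\partial_{xx}\gamma_1+H_{v^*}\partial_x\gamma_1=0$ on $[0,T)\times\mathbb{R}$, with $\gamma_1(t,\cdot)\to\mathbf{1}_{x\le 0}$ as $t\to T$. We then identify it with $\Psi$ through the uniqueness statement in Remark \ref{rem:Psi}. Since $0\le\gamma_1\le 1$, $\gamma_1$ is a bounded solution, so the comparison principle applies once we know that $\gamma_1$ solves the equation and attains the terminal data in the weak sense.

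\textbf{Step 1: a Hopf--Cole representation of $V$.} We write $\gamma_1=\phi_1/(\phi_1+\phi_2)$, where $\phi_j:=e^{-\eta V_j}g_j$ and $\eta=b^2/(r\sigma^2)$. By Eq. \eqref{u*}, $\partial_x V=\sum_j\gamma_j\partial_xV_j$, which is exactly $\partial_x V=-\eta^{-1}\partial_x\log\phi$ with $\phi=\phi_1+\phi_2$. So $V=-\eta^{-1}\log\phi$ up to a function of $t$, and the choice of $\eta$ is precisely the one that linearizes the HJB equation \eqref{HJB}.

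We first verify that each $\phi_j$ solves the same linear equation $\partial_t\phi+\frac{\sigma^2}{2}\partial_{xx}\phi+ax\partial_x\phi-\frac{\eta q}{2}x^2\phi=0$ (recall $X^*=0$).

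\emph{The exponential factor.} The function $e^{-\eta V_j}$ solves this equation because $V_j$ solves the HJB equation with quadratic terminal data.

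\emph{The probability factor.} $g_j$ solves the backward Kolmogorov equation $\partial_t g_j+\frac{\sigma^2}{2}\partial_{xx}g_j+(ax+bu_j)\partial_xg_j=0$. This holds because it is a conditional exit probability of the diffusion driven by $u_j$; it can also be checked directly from Eqs. \eqref{eq:gj}--\eqref{Xj Sigma}.

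\emph{The product.} Expanding $\partial_t(e^{-\eta V_j}g_j)$, the cross term $\sigma^2\partial_x(e^{-\eta V_j})\partial_xg_j=-\sigma^2\eta\,\partial_xV_j\,e^{-\eta V_j}\partial_xg_j=b\,u_j\,e^{-\eta V_j}\partial_xg_j$ exactly cancels the $bu_j\partial_xg_j$ term in $g_j$'s equation. This cancellation is the key algebraic point, and it uses $\eta\sigma^2=b^2/r$.

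\textbf{Step 2: the ratio of two solutions.} Set $\mathcal{M}\phi:=\partial_t\phi+\frac{\sigma^2}{2}\partial_{xx}\phi+ax\partial_x\phi-\frac{\eta q}{2}x^2\phi$. For $\gamma=\phi_1/\phi$ with $\mathcal{M}\phi_1=\mathcal{M}\phi=0$, the standard Doob $h$-transform computation gives
\[
\partial_t\gamma+\frac{\sigma^2}{2}\partial_{xx}\gamma+\Big(ax+\sigma^2\frac{\partial_x\phi}{\phi}\Big)\partial_x\gamma=0 .
\]
By Step 1, $\sigma^2\partial_x\phi/\phi=-\sigma^2\eta\,\partial_xV=bu^*$, so the drift is $ax+bu^*=H_{v^*}$. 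Hence $\gamma_1$ solves $\partial_t\gamma_1=-\mathcal{L}^A_{v^*}\gamma_1$ classically on $[0,T)\times\mathbb{R}$. Smoothness of $\gamma_1$ there is guaranteed by its explicit form.

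\textbf{Step 3: terminal data and identification.} As $t\to T$, $\gamma_1\to\mathbf{1}_{x\in W_1}=\mathbf{1}_{x<0}$ pointwise for $x\neq0$, as noted in Section \ref{subsec:map}, with $0\le\gamma_1\le1$. Dominated convergence then gives convergence in $L^1_{loc}$, which is the sense in which the mollified solutions $\Psi_\epsilon$ of Remark \ref{rem:Psi} attain their data. Comparison of bounded weak solutions therefore yields $\Psi=\gamma_1$.

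\textbf{Main obstacle.} I expect the main difficulty to be the verification in Step 1 that $g_j$ satisfies the backward Kolmogorov equation with drift $ax+bu_j$, because $X_j$ and $\Sigma$ depend on $t$ through $\alpha$ and $s_j$. The first thing I would try is the probabilistic route: the Markov property makes $t\mapsto g_j(t,x_j(t))$ a martingale, and Itô's formula then gives the PDE. I would back this up by a direct differentiation using $\partial_t\alpha(T,t)=-(a-\frac{b^2}{r}p)\alpha(T,t)$ and $\dot\Sigma=-\sigma^2\alpha^2$.

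A secondary technical point is the $O((T-t)^{-1/2})$ blow-up of $\partial_x\gamma_1$ near $x=0$ noted in Remark \ref{rem:gamma1}. This is harmless because the uniqueness argument only uses boundedness of $\gamma_1$ together with the classical equation on $[0,T)$.
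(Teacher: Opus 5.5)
Your architecture is sound, and it is a more conceptual route than the paper's. The paper's proof simply asserts that substituting the explicit $\gamma_1$ of Eq.~\eqref{eq:gamma} into Eq.~\eqref{eq:Psidot} works. Two parts of your argument are correct as written: the verification in Step~1 that each $\phi_j=e^{-\eta V_j}g_j$ solves the linear equation (the cross-term cancellation using $\eta\sigma^2=b^2/r$), and the $h$-transform computation in Step~2.

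The gap is at the start of Step~1. You write that $\partial_xV=\sum_j\gamma_j\partial_xV_j$ ``is exactly'' $\partial_xV=-\eta^{-1}\partial_x\log\phi$. Differentiating $\log\phi$, however, also hits the factors $g_j$:
\[
-\eta^{-1}\partial_x\log\phi=\sum_j\gamma_j\partial_xV_j-\frac{1}{\eta\phi}\sum_j e^{-\eta V_j}\partial_xg_j .
\]
So the drift produced by Step~2, $ax+\sigma^2\partial_x\phi/\phi$, equals $H_{v^*}=ax+b(\gamma_1u_1+\gamma_2u_2)$ if and only if
\[
e^{-\eta V_1}\partial_xg_1+e^{-\eta V_2}\partial_xg_2=0 .
\]
Given Steps~1--2, the lemma is equivalent to this cancellation. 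It is not an algebraic triviality: it is where the choice of $W_1,W_2$ (split at the point where the two terminal costs tie) enters. By contrast, the Kolmogorov equation for $g_j$, which you flag as the main obstacle, is routine.

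The gap closes with a short direct computation at $v^*$:
\begin{itemize}
\item From $s_1=-y_1C\alpha(T,\cdot)$ and $\int_t^T\alpha^2(T,\tau)\,d\tau=\Sigma/\sigma^2$ you get $X_{1,2}=\alpha(T,t)x\pm y_1C\eta\Sigma$.
\item Since $d_1=d_2$, you have $V_1-V_2=2s_1x=-2y_1C\alpha(T,t)x$.
\item The derivatives are $\partial_xg_1=-\alpha(T,t)e^{-X_1^2/(2\Sigma)}/\sqrt{2\pi\Sigma}$ and $\partial_xg_2=\alpha(T,t)e^{-X_2^2/(2\Sigma)}/\sqrt{2\pi\Sigma}$.
\item Finally $(X_1^2-X_2^2)/(2\Sigma)=2y_1C\eta\alpha(T,t)x=-\eta(V_1-V_2)$, so the two terms cancel exactly.
\end{itemize}

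Alternatively, reverse your logic. Step~1 shows $\phi$ solves the linear equation with $\phi(T,\cdot)=e^{-\eta\min_jV_j(T,\cdot)}$, so $-\eta^{-1}\log\phi$ solves Eq.~\eqref{HJB} with the right terminal data. Uniqueness for that HJB gives $V=-\eta^{-1}\log\phi$, and Eq.~\eqref{u*} then yields $\sigma^2\partial_x\phi/\phi=bu^*$. The direct computation is preferable, since it needs neither an HJB uniqueness result nor Eq.~\eqref{u*}, which itself encodes the same cancellation. With this inserted, your Step~3 is adequate at the level of rigor of Remark~\ref{rem:Psi}.
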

\begin{proof}
 The statement can be verified by substituting the explicit expression of $\gamma_1$ in Eq. \ref{eq:gamma} into the adjoint PDE, where $u^*$ is given by Eq. \ref{u^* sym}. Note that $\gamma_1$ appears in the expression of $u^*$, so Eq. \ref{eq:Psidot} is in fact a \emph{nonlinear} PDE in $\gamma_1$.
\end{proof}

Next, we define a new function $Z(t;v^*,q)\triangleq  \bigintssss_{\mathbb{R}}\hat f(t,x;v^*,q)\Psi(t,x;v^*,q) dx$.
From Eq. \ref{eq:dgdv}, and the final time condition on $\Psi$, it follows that $G'(v^*;q)=Z(T;v^*,q)$. 

\begin{lemma}
$Z$ satisfies the ODE $\dot{Z}=\bigintssss_{\mathbb{R}}b\hat{u}(t,x)f^*(t,x)\partial_x\Psi(t,x)dx$, with initial condition $Z(0)=0$.

\end{lemma}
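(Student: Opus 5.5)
Differentiate $Z(t)=\int_{\mathbb{R}}\hat f\,\Psi\,dx$ in time. Substitute the linearized Fokker--Planck equation \eqref{FP fhat} for $\partial_t\hat f$ and the adjoint equation \eqref{eq:Psidot} for $\partial_t\Psi$. Formally,
\begin{equation*}
\dot Z=\int_{\mathbb{R}}\Big(\mathcal{L}_{v^*}\hat f-\partial_x(b\hat u f^*)\Big)\Psi\,dx-\int_{\mathbb{R}}\hat f\,\mathcal{L}^A_{v^*}\Psi\,dx .
\end{equation*}
Since $\mathcal{L}^A_{v^*}$ is the $L^2$-adjoint of $\mathcal{L}_{v^*}$, the terms $\int(\mathcal{L}_{v^*}\hat f)\Psi$ and $\int\hat f\,\mathcal{L}^A_{v^*}\Psi$ cancel. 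Integrating the source term by parts once gives
\begin{equation*}
\dot Z=\int_{\mathbb{R}}b\hat u f^*\partial_x\Psi\,dx ,
\end{equation*}
which is the claimed ODE. The initial condition is immediate: $\hat f(0,\cdot)=0$ by Lemma \ref{lem:fhat}, and $\Psi$ is bounded (Remark \ref{rem:Psi}), so $Z(0)=0$.

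To make this rigorous I would work on $[0,T_0]$ with $T_0<T$, where everything is smooth. By Lemma \ref{lemma:gamma1}, $\Psi=\gamma_1$ is explicit, smooth on $[0,T)\times\mathbb{R}$, and satisfies $0\le\gamma_1\le1$. By \eqref{eq:gamma_x} and Remark \ref{rem:gamma1}, $\partial_x\gamma_1$ is bounded on $[0,T_0]\times\mathbb{R}$, and the same holds for $\partial_{xx}\gamma_1$ by differentiating the explicit formula. For $\hat f$, I would rely on parabolic regularity for \eqref{FP fhat}: the drift $H_{v^*}$ is smooth with linear growth, and the source $\partial_x(b\hat u f^*)$ is smooth with $\hat u$ of linear growth (Remark \ref{rem:gammahat1}). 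This should give $\hat f\in C^{1,2}$ on $(0,T_0]\times\mathbb{R}$ with Gaussian-type decay in $x$, since $f^*$ inherits this decay from the Gaussian initial datum. The decay makes the boundary terms $[\tfrac{\sigma^2}{2}(\partial_x\hat f\,\Psi-\hat f\,\partial_x\Psi)-H\hat f\Psi-b\hat u f^*\Psi]_{-\infty}^{\infty}$ vanish, because $\Psi$, $\partial_x\Psi$ are bounded and $H$, $\hat u$ grow linearly. Differentiation under the integral sign is justified by dominated convergence on compact subintervals of $(0,T_0]$.

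The result is the identity $Z(t)=\int_0^t\int b\hat u f^*\partial_x\Psi\,dx\,ds$ for every $t<T$. It remains to let $t\to T$. Continuity of $Z$ at $T$ follows from $\hat f\in C([0,T],L^1)$ (Lemma \ref{lem:fhat}), together with $\Psi(t,\cdot)\to\mathbf{1}_{x\le0}$ pointwise off $x=0$ and $|\Psi|\le1$. The step I expect to be the main obstacle is this passage to $t=T$, specifically integrability of the right-hand side near $T$. There $\partial_x\Psi$ blows up like $(T-t)^{-1/2}$ near $x=0$, but it is a nonpositive function whose $x$-integral equals $\Psi(t,\infty)-\Psi(t,-\infty)=-1$. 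Since $b\hat u f^*$ is bounded on $[0,T]\times\mathbb{R}$ ($\hat u$ has linear growth and $f^*$ decays fast), the integrand is bounded in absolute value by $\sup|b\hat u f^*|$ uniformly in $t$. The time integral therefore converges, and the ODE holds on $[0,T]$ in integrated form, with $Z(T)=G'(v^*;q)$.
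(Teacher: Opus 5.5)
Your proposal is correct and follows the paper's proof: differentiate $Z$, substitute the linearized Fokker--Planck equation and the adjoint equation, cancel the $\mathcal{L}_{v^*}$ and $\mathcal{L}^A_{v^*}$ terms by adjointness, integrate the source term by parts once, and make this rigorous on $[0,T_0]$ for every $T_0<T$. The differences are only in the technical justification: the paper uses spatial cut-offs, which need only the $L^1$ bound on $\hat f$ and the first-moment bound on $f^*$ (no pointwise decay of $\hat f$ or $\partial_x\hat f$), and it treats the limit $t\to T$ after the lemma using nonnegativity of the integrand, whereas your bound $\int_{\mathbb{R}}|\partial_x\Psi|\,dx=1$ together with boundedness of $b\hat u f^*$ gives absolute integrability directly and does not rely on the sign.
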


\begin{proof}
Direct differentiation yields
\begin{equation*}\label{zdot2}
    \dot{Z}=\int_\mathbb{R} \big((\partial_t\hat{f})\Psi  +\hat{f}\partial_t\Psi\big)\,dx.
\end{equation*}
From Eq. \ref{FP fhat}, $\partial_t\hat{f}=\mathcal{L}\hat{f}-\partial_x(b\hat{u}f^*)$. Hence,
\begin{align*}
\dot{Z}=\int_\mathbb{R} \big(\mathcal{L}[\hat{f}]\Psi  +\hat{f}\partial_t\Psi\big)\,dx-\int_\mathbb{R} \Psi(t,x)\partial_x(b \hat{u}(t,x)f^*(t,x))dx.
\end{align*}
Using Eq. \ref{eq:Psidot} in the first integral, and applying integration by parts on the second integral, 
\begin{align*}
\dot{Z}=\int_\mathbb{R} \big(\mathcal{L}[\hat{f}]\Psi  -\hat{f}\mathcal{L}_A\Psi)\,dx+\int_\mathbb{R} b \hat{u}(t,x)f^*(t,x)\partial_x(\Psi(t,x))dx.
\end{align*}
But the first integral is $0$ by using the property of adjoints, hence we obtain the desired ODE. The initial condition $Z(0)=0$ follows from the fact that $\hat{f}(0,x)=0$.
\pg{
Since $f^*$ has uniformly bounded first moment, and $\hat{f}$ is uniformly bounded in $L^1$ on each $[0,T_0]$ as shown in the Appendix \ref{sec:a2}, standard spatial cut-off arguments justify the integrations by parts above on $[0,T_0]$ for all $T_0<T$. 
}
\end{proof}

Next, we define $h(t,x)\triangleq -\partial_x\Psi(t,x)$. Differentiating Eq. \ref{eq:Psidot} w.r.t. $x$, we obtain a Fokker-Planck equation for the evolution of $h$,
\begin{align} \label{eq:h_FP}
    \partial_th(t,x)=-\dfrac{\sigma^2}{2}\partial_{xx}h(t,x)-\partial_x(H_{v^*}(t,x) h(t,x)).
\end{align}
with final condition $h(T,x)=\delta_0(x)$. Since  $h$ satisfies a Fokker-Planck equation with positive final condition, $h(t,x)\ge0$ for all $0\leq t\leq T$. 

Finally, we integrate the $Z$ ODE to obtain the integral form of $Z$,
\begin{equation}\label{Z(T;q)}
    Z(T;q)=\int_0^T\int_\mathbb{R} b (-\hat{u}(t,x;v^*,q))f^*(t,x;v^*,q)h(t,x;v^*,q)\,dxdt.
\end{equation}
A key property of Eq. \ref{Z(T;q)} is that each term in the integrand is nonnegative. \pg{Recall that Lemma \ref{lem:fhat} provides $\hat{f}(t)\rightarrow\hat{f}(T)$. Together with $0\leq\Psi\leq 1$, and $\Psi(t,x)\rightarrow{\mathbf{1}}_{x<0}$, this shows that $Z(t)\rightarrow G'(v^*;q)$ by dominated convergence. Since the integrand is nonnegative, we obtain Eq. \ref{Z(T;q)} as $t\rightarrow T$.}

The proof of Theorem \ref{th_main} will be obtained by constructing a monotonic in $q$ lower bound on $G'(v^*;q)=Z(T;v^*,q)$, exploiting the integral formulation of Eq. \ref{Z(T;q)}. We will lower bound the integral by its lower bound on the subset $([T - \delta T(q), T] \times [-\delta x(q), \delta x(q)])$ of the full domain $[0,T]\times\mathbb{R}$, where the size of the subset decreases monotonically in $q$. Hence we will use,
\begin{equation}\label{lb_z_subset} 
Z(T;q)\ge\int_{T-\delta T(q)}^T\int_{-\delta x(q)}^{\delta x(q)} b (-\hat{u}(t,x))f^*(t,x)h(t,x)\,dx\,dt.
\end{equation}

This strategy allows the use of local estimates near the point $(t=T,x=0)$. This is because \( h(T,x) \) and \( \hat{u}(T,x) \) are explicitly known, and the lower bounds of \( f^*(t,x) \) and \( h(t,x) \) will involve estimating Gaussians centered at $x=0$. We obtain the lower bounds of the three terms in the integrand of Eq. \ref{lb_z_subset} in the next three subsections, and combine those bounds to provide the proof of Theorem \ref{th_main} in the final subsection.

\begin{remark}
Here, $h$ is understood as a measure-valued solution of Eq. \ref{eq:h_FP}, and a distributional derivative of $\Psi$. Its existence can be shown using a mollification process as in Remark \ref{rem:Psi}. Classical solutions of the (time-reversed) problem exist for the problem with mollified initial data. Their positivity follows from the parabolic maximum principle. Since the Fokker-Planck Cauchy problem is well-posed for measure-valued initial data, we can pass to the limit to obtain the desired measure-valued solution $h$ \citep{bogachev2022fokker}.
\end{remark}

In the estimates below, the generic constants appearing as prefactors are all positive numbers independent of $q$; their values may change from line to line. Any $q$ dependence of other constants will be written explicitly.

\subsection{Lower bound of $-\hat{u}(t,x)$} \label{sec:lb_uhat}

Recall from Eq.\eqref{uhat2} that $-\hat u$ is a sum of three positive terms. Keeping only the first term in the lower bound and using the expression for $\hat{s}(t)$ from Eq. \ref{sh},

\begin{equation}\label{uh lb}
    -\hat{u}(t,x)\ge\dfrac{b}{r}\hat{s}(t)=C_1\int_t^T q\alpha(\theta,t)(e^{a\theta}-e^{-a\theta})\,d\theta, 
\end{equation}
with $C_1=2\dfrac{b}{r}|y_1|\dfrac{Cb^2e^{aT}}{2ar-Cb^2+Cb^2e^{\pg{2a}T}}$. 

From Eqs. (\ref{alpha ineq}, \ref{alphaTt ineq}), we recall the bounds
\begin{equation*}\label{alpha bound1}
    \dfrac{1-K(q)}{e^{\Delta(q)(T-t)}-K(q)e^{-\Delta(q)(T-t)}}\le\alpha(\theta,t)\le1.
\end{equation*}

While the above formula has a complicated dependence on $q$, we can get $q-$independent bound on $\alpha(\theta,t)$ for large enough $q$, in an appropriately chosen time window $[T-\delta T(q),T]$. Hence we make the following choice:
\begin{align}\label{eq:deltaT}
   \delta T(q)\triangleq  \dfrac{1}{\Delta(q)},
\end{align} where recall that $\Delta(q) = \sqrt{a^2 + b^2q/r}$. It follows that for $\theta$ and $t$ in the window, $T-t\leq \dfrac{1}{\Delta(q)}$, and
\begin{equation*}\label{alpha bound}
     \alpha(\theta,t)\geq  \dfrac{1-K(q)}{e^{\Delta(q)(T-t)}-K(q)e^{-\Delta(q)(T-t)}}\geq \dfrac{1-K(q)}{e-K(q)}\geq \dfrac{1}{e+1}\triangleq C_2,
\end{equation*}
where we have used that 
\begin{align}\label{eq:Kbound}
    -1\le K(q)\le0
\end{align} for large enough $q$, from Eq. \ref{eq:K(q)}.

Let $F_u(\theta)=e^{a\theta}-e^{-a\theta}$. By continuity of $F_u(\theta)$, we can find a small enough time window $\theta\in [T-\delta T(q),T]$ s.t. $F_u(\theta)>\dfrac{F_u(T)}{2}\triangleq C_3$, independent of $q$. Since window size is monotonically decreasing in $q$, this is achievable by picking large enough $q$.
 
Inserting the above two bounds in Eq. \ref{uh lb}, we get the final lower bound on $\hat{u}(t,x)$,
\begin{equation}
   \label{lb uhat2}
    -\hat{u}(t,x)\ge C_uq(T-t),
\end{equation}
which holds for large enough $q$, in the time window $t\in [T-\dfrac{1}{\Delta(q)},T]$.

\subsection{Lower bound of $f^*$} 
\pg{\label{sec:lbf*}}
We rewrite the Fokker-Planck equation for $f^*(t,x)$ by splitting its drift as
\begin{equation}\label{FP o-u process}
    \partial_tf^*=\frac{\sigma^2}{2}\partial_{xx}f^*-\partial_x\big(A(t)xf^*\big)-\partial_x\big(B(t,x) f^*\big),
\end{equation}
where, $A(t)=a-\dfrac{b^2}{r}p(t)$, and $B(t,x)\triangleq(1-2\gamma_1(t,x)) \dfrac{b^2}{r} s_1(t)$, using Eq.\eqref{u^* sym}. Our strategy here will be to employ the Duhamel formula, treating the third term on the r.h.s in Eq. \ref{FP o-u process} as an inhomogeneous `source' term. The above split is useful here because of the following properties: 
\begin{itemize}
\item{Uniform upper bound on $B(t,x)$ on $[0,T]\times \mathbb{R}$}:
Since the weight satisfies $0\leq \gamma_1\leq 1$, and the $\|s_1(t)\|_{L^\infty([0,T])} \le|y_1|C$ from Eq.\eqref{s_1(t)}, we get
\begin{equation}
    \label{ub B}\|B(t,x)\|_{L^\infty(\mathbb{R}\times[0,T])}\le\dfrac{\pg{b^2}}{r}\|1-2\gamma_1(t,x)\|_{L^\infty(\mathbb{R}\times[0,T])}\|s_1(t)\|_{L^\infty([0,T])}\le\dfrac{\pg{b^2}}{r}|y_1|C:=l_B.
\end{equation}

\item{Non-negativity of $\partial_x B(t,x)$}:

The derivative
\begin{align}\label{eq:d_xB}
    B_x(x,t)=-2\partial_x\gamma_1(x,t)\dfrac{\pg{b^2}s_1(t)}{r}\geq 0,
\end{align} 
using Eqs. (\ref{eq:gamma_x},\ref{s_1(t)}).
\end{itemize}

Applying Duhamel's formula to Eq. \ref{FP o-u process} \pg{with $t_0=0$},
\begin{equation}\label{fv sol}
    \begin{split} 
        &f^*(t,x)=\int_\mathbb{R}F_h(t,x,t_0,\xi)f^*(t_0,\xi)d\xi-\int_0^t\int_\mathbb{R}F_h(t,x,s,\xi)\partial_\xi(B(s,\xi)f^*(s,\xi))d\xi ds\\& =\underbrace{\int_\mathbb{R}F_h(t,x,t_0,\xi)f^*(t_0,\xi)d\xi}_{f_h(t,x)}+\underbrace{\int_0^t\int_\mathbb{R}\partial_\xi F_h(t,x,s,\xi)B(s,\xi)f^*(s,\xi)d\xi ds}_{f_n(t,x)},
    \end{split}
\end{equation}
where $f_h$ is the homogeneous solution, and $f_n$ is the non-homogeneous part. In Eq. \ref{fv sol}, $F_h$ is the fundamental solution of Eq. \ref{FP o-u process} with drift $A(t)x$,
\begin{equation}\label{F_h}
F_h(t,x,s,\xi)=\dfrac{1}{\sqrt{2\pi \Sigma_h(t,s)}}\exp\left(-\dfrac{(x-\Phi_h(t,s)\xi)^2}{2\Sigma_h(t,s)}\right),
\end{equation}
with $\Phi_h(t,s)=e^{\bigintssss_s^tA(\tau)d\tau}$ and $\Sigma_h(t,s)=\sigma^2\bigintssss_s^te^{2\bigintssss_\tau^tA(u)du}d\tau$.
The derivative of $F_h$  with respect to \(\xi\) is 
\begin{equation}\label{DF_h}
\partial_\xi F_h=\frac{\Phi_h(t,s)(x-\Phi_h(t,s)\xi)}{\Sigma_h(t,s)}F_h.
\end{equation}

\subsubsection{Lower bound of \texorpdfstring{$f_h(t,x)$}{f_h(t,x)}}\label{sec:lb_fh}
With the symmetric initial condition $f^*(0,x)=\mathcal{N}(0,1)$, the homogeneous solution is the Gaussian
\begin{align}
  f_h(t,x)=\dfrac{1}{\sqrt{2\pi\Sigma(t)}}\exp\left(-\dfrac{x^2}{2\Sigma(t)}\right),\label{eq_fh}
\end{align}
with $\dot\Sigma(t)=2(a-\dfrac{b^2p(t)}{r})\Sigma(t)+\sigma^2,$ and $\Sigma(0)=1$.

Using the expression for $p(t)$ from Eq. \ref{alpha p(t)}, we obtain an explicit expression for $\Sigma(t)$,
\begin{equation*}
    \Sigma(t)=e^{-2\Delta(q) t}\pg{D(q,t)^2}+\sigma^2 \pg{D(q,t)}\dfrac{1-e^{-2\Delta(q) t}}{2\Delta(q)},
\end{equation*}
where $\pg{D(q,t)}\triangleq \dfrac{1-K(q)e^{-2\Delta(q) (T-t)}}{1-K(q)e^{-2\Delta(q) T}}$.

Again using Eq. \ref{eq:Kbound},
\begin{equation*}
    1\le \pg{D(q,t)}\le \dfrac{1-K(q)}{1-K(q)e^{-2\Delta(q) T}}\le1-K(q)\le2,
\end{equation*}
and thus $\Sigma(t)$ satisfies
\begin{align*}
    \sigma^2 \dfrac{1-e^{-2\Delta(q) t}}{2\Delta(q)} \le \Sigma(t)\le 4e^{-2\Delta(q) t}+ \dfrac{\sigma^2}{\Delta(q)}.
\end{align*}

For $t\in[T-\delta T(q),T]$ with $\delta T=\dfrac{1}{\Delta(q)}$ as before, we have $e^{-2\Delta(q) T}\le e^{-2\Delta(q) t}\le e^{-2\Delta(q) T+2}$. Exploiting this, we define $\epsilon_{u}(q)\triangleq \Delta( q)4e^{-2\Delta( q) T+2}+\sigma^2$, and $\epsilon_{l}(q)\triangleq 1-e^{-2\Delta( q) T+2}$. This implies that $\pg{\dfrac{\sigma^2}{2}}\epsilon_l(q)\leq \Delta(q)\Sigma(t)\leq \epsilon_u(q)$, which when inserted in Eq. \ref{eq_fh} yields the following lower bound on $f_h$,
\begin{align*}
  f_h(t,x)\ge\dfrac{\sqrt{\Delta(q)}}{\sqrt{2\pi\epsilon_u(q)}}\exp\left(\frac{-\Delta(q)x^2}{\sigma^2\epsilon_l(q)}\right).
\end{align*}

Since $\epsilon_u$ is an eventually decreasing function of $q$, and $\epsilon_l$ is a monotonically increasing function of $q$, we can choose a $q_f$ such that for all $q>q_f$, $\epsilon_u(q)<\epsilon_u(q_f)$, and $\epsilon_l(q)>\epsilon_l(q_f)$. This implies that for $q>q_f$, 
\begin{equation*}
    f_h(t,x)\ge  \dfrac{\sqrt{\Delta(q)}}{\sqrt{2\pi\epsilon_{u}(q_f)}}\exp\left(-\dfrac{\Delta(q)x^2}{\sigma^2 \epsilon_{l}(q_f)}\right).
\end{equation*}

To eliminate the $q$ dependence of the exponential, we make the following choice of the spatial window around $x=0$: 
\begin{align}\label{eq:deltax}
\delta x(q)=\sqrt{\delta T(q)}=\dfrac{1}{\sqrt{\Delta(q)}}.\end{align} 
Hence, in the space-time window $[-\delta x(q),\delta x(q)]\times [T-\delta T(q),T]$, and for large enough $q$, the above bound reduces to
\begin{equation}\label{lb f_h}
    f_h(t,x)\ge C_f\sqrt{\Delta(q)}.
\end{equation}

\subsubsection{Lower bound of \texorpdfstring{$f_n(t,x)$}{f_n(t,x)}}
We lower bound the nonhomogeneous term by using the Cauchy-Schwarz inequality,
\begin{equation}
\left |\int_\mathbb{R}\partial_\xi F_h(t,x,s,\xi)B(s,\xi)f^*(s,\xi)d\xi\right |\leq \|\partial_\xi F_h(t,x,s,.)\|_{L^2_{\xi}}\|B(s,.)f^*(s,.)\|_{L^2_{\xi}}, \label{fn_CS}
\end{equation}
which yields the lower bound $f_n(t,x)\geq -\int_{0}^t l_b \|\partial_\xi F_h(t,x,s,.)\|_{L^2_{\xi}}\|f^*(s,.)\|_{L^2_{\xi}}ds$, using the upper bound on $B$ from Eq. \ref{ub B}. 

\paragraph{\textbf{An $L^2$ upper bound on $f^*$:}}
Multiplying Eq. \eqref{FP o-u process} with $f^*$ on both sides and integrating over the spatial coordinate, 
\begin{equation*}\begin{split}
\partial_t \|f^*\|^2_{L_x^2}=-\sigma^2\|\partial_xf^*\|^2_{L_x^2}-\pg{2}\int_{\mathbb{R}}f^*(x,t)\partial_x((A(t)x+B(x,t))f^*(x,t))dx\\
=-\sigma^2 \|\partial_xf^*\|^2_{L_x^2}-\int_{\mathbb{R}}(f^*(x,t))^2\partial_x(A(t)x+B(x,t))dx,\label{fstar_L2}
\end{split}
\end{equation*}
where we have repeatedly used integration by parts. 

Next, we use $A(t)\geq -\Delta(q)$ (proved in derivation of Eq. \ref{expbnd_phi} below) and $\partial_xB(x,t)>0$ (from Eq. \ref{eq:d_xB}) in the above equation, to get
\begin{align*}\partial_t \|f^*\|^2_{L_x^2}\leq -\sigma^2\|\partial_xf^*\|^2_{L_x^2}+\Delta(q)\|f^*\|^2_{L_x^2}. \end{align*}

To obtain a closed inequality from above, we use the following 1D Nash inequality \citep{nash1958continuity}:
\begin{align*} \|f^*\|^6_{L_x^2}\leq C_1 \|f^*\|^4_{L_x^1}\|\partial_xf^*\|^2_{L_x^2},\end{align*} and $\|f^*\|_{L_x^1}=1$ to get the differential inequality
\begin{align*}\dot{L}(t) \leq \Delta(q)L(t)-(\sigma^2/C_1)L(t)^3,
\end{align*} where $L\triangleq\|f^*\|^2_{L_x^2}$. It is straightforward to conclude that for positive initial conditions, $L(t)\leq \max\{L(0),\sqrt{\Delta(q)C_1/\sigma^2}\}$. 
This gives us our desired $s-$independent bound for large enough $q$:
\begin{align} \|f^*(s,.)\|_{L_x^2}\leq C_1 \Delta(q)^{1/4}. \label{fv_l2} \end{align} 
\paragraph{\textbf{An upper bound on $\int_0^t \|\partial_{\xi}F_h\|_{L^2_{\xi}}ds$:}}

From Eq. \ref{DF_h}, we obtain exact expression \begin{align*} \|\partial_\xi F_h(t,x,s,.)\|_{L^2_{\xi}}=\dfrac{\sqrt{\Phi_h(t,s)}}{2\pi^{1/4}}\Sigma_h(t,s)^{-3/4}, \end{align*} which we bound for $t\in (T-\dfrac{1}{\Delta(q)},T)$, the previously chosen final window. 
We set $\hat{t}=t-s$ and evaluate the above integral by splitting the time domain into a near and a far integral: $\hat{t} \in (0,\hat{t}_0\triangleq1/\Delta(q))$, and $\hat{t} \in [\hat{t}_0,t).$ We will need the following additional bounds:
\begin{itemize}
\item Uniform lower bound on $\Sigma_h$ for all $\hat{t}$: For $\Sigma_h$, we use that $A(.)\geq -\Delta(q)$ for $q$ large enough, and hence, $\bigintssss_{\tau}^t Ad\tau\geq -\Delta(q)(t-\tau)$. From Eq. \ref{F_h}, $\Sigma_h(t,t-\hat{t})=\sigma^2\bigintsss_{t-\hat{t}}^t \exp(2\bigintsss_{\tau}^t A(t_1)dt_1)d\tau \geq \sigma^2 \bigintsss_{t-\hat{t}}^t \exp(-2\Delta(q)(t-\tau)) d\tau$. Hence, the required lower bound is:
\pg{\begin{align} \Sigma_h(t,t-\hat{t})\geq\dfrac{\sigma^2}{2\Delta(q)}(1-e^{-2\Delta(q)\hat{t}})\label{lb_sig}\end{align}  }

\item Exponentially decaying upper bound on $\Phi_h$ for $\hat{t}>\dfrac{1}{\Delta(q)}$:  Recall from Eq. \ref{def A(t)}
: $A(\tau)=-\Delta(q)\dfrac{1+K(q)e^{-2\Delta(q)(T-\tau})}{1-K(q)e^{-2\Delta(q)(T-\tau)}}<0$, where for $q$ large enough, $-1\leq K(q) \leq 0$ from Eq. \ref{eq:Kbound}. Now for $\tau \leq T-\dfrac{1}{\Delta(q)}$, $e^{-2\Delta(q)(T-\tau)}\leq e^{-2}$. Hence, $A(\tau)=-\Delta(q)\dfrac{1-\hat{K}(q)}{1+\hat{K}(q)}$, where $\hat{K}(q)\triangleq -K(q)e^{-2\Delta(q)(T-\tau)} \in [0,e^{-2}]$. This implies $\dfrac{1-\hat{K}(q)}{1+\hat{K}(q)} \pg{> \dfrac{1-e^{-2}}{1+e^{-2}}}:=C_2<1$, with $C_2$ independent of $q$, and $A(\tau)\leq -C_2\Delta(q)$. 

We get a uniform exponential bound as follows:
\begin{equation}
\label{expbnd_phi}
\begin{aligned}
\Phi_h(t,t-\hat{t})
&= \exp\left\{
\begin{aligned}
&\int_{t-\hat{t}}^{T-1/\Delta(q)}
\underbrace{A(\tau)}_{\leq -C_2\Delta(q)}\,d\tau 
&\quad
+\int_{T-1/\Delta(q)}^{t}
\underbrace{A(\tau)}_{\leq 0}\,d\tau
\end{aligned}
\right\} \\
&\leq
\exp\left\{
\int_{t-\hat{t}}^{T-1/\Delta(q)}
-C_2\Delta(q)\,d\tau
\right\}
\leq
\exp\!\left[
-C_2\Delta(q)
\left(\hat{t}+T-t-\frac{1}{\Delta(q)}\right)
\right] \\
&\leq
\exp\!\left[
-C_2\Delta(q)
\left(\hat{t}-\frac{1}{\Delta(q)}\right)
\right] \\
&= C_3 \exp\!\left(-C_2\Delta(q)\hat{t}\right).
\end{aligned}
\end{equation}
\end{itemize}

We are now ready to evaluate the near and far integrals.
\begin{itemize}
\item Integral over $0\leq \hat{t} \leq 1/\Delta(q)$ : In this interval, we use the inequality $\dfrac{1-e^{-m}}{m}\geq \dfrac{1-e^{-2}}{2}$ for $0< m\leq 2$. By putting $m=2\Delta(q)\hat{t}$,  Eq. \ref{lb_sig} yields $\Sigma_h(t,t-\hat{t})\geq \dfrac{\sigma^2}{2}(1-e^{-2})\hat{t}$. In addition, we use the rough bound $\sqrt{\Phi_h(t,t-\hat{t})}\leq 1$. The integral is \begin{align*}
\int_{0}^{1/\Delta(q)} \sqrt{\Phi_h(t,t-\hat{t})}\Sigma_h(t,t-\hat{t})^{-3/4}d\hat{t} \leq C_4\int_{0}^{1/\Delta(q)} \hat{t}^{-3/4}d\hat{t}=C_4\Delta(q)^{-1/4}.\end{align*}

\item Integral over $\hat{t} \geq 1/\Delta(q)$ : For this interval, Eq. \ref{lb_sig} yields $\Sigma_h(t,t-\hat{t})\geq \dfrac{\sigma^2}{2\Delta(q)}(1-e^{-2})$, and hence $\Sigma_h(t,t-\hat{t})^{-3/4}\leq C_5\Delta(q)^{3/4}$. Using this bound along with Eq. \ref{expbnd_phi} to evaluate the integral, we get \begin{align*}
\int_{1/\Delta(q)}^{t} \sqrt{\Phi_h(t,t-\hat{t})}\Sigma_{\pg{h}}(t,t-\hat{t})^{-3/4}d\hat{t} \leq C_5\Delta(q)^{3/4}\int_{1/\Delta(q)}^{\infty} \exp(-C_2\Delta(q)\hat{t})d\hat{t}\\=C_5\Delta(q)^{-1/4}.
\end{align*}
\end{itemize}

Combining the two integrals above, we get
\begin{align}
\int_0^t \|\partial_{\xi}F_h\|_{L^2_{\xi}}ds \leq C_5\Delta(q)^{-1/4}.\label{bnd_Fh_deriv}
\end{align} 

Using the bounds given by Eqs. \ref{fv_l2} and \ref{bnd_Fh_deriv} in Eq. \ref{fn_CS}, we get the final lower bound on $f_n$
\begin{align}\label{eq:fnbnd}
f_n(t,x)\geq -C_1l_b\Delta(q)^{1/4} \Delta(q)^{-1/4}=-\epsilon_n,
\end{align}
where $\epsilon_n>0$ is independent of $q$.

\subsubsection{Final lower bound of $f^*(t,x)$}
Combining the lower bounds from Eqs.\eqref{lb f_h} and \eqref{eq:fnbnd} into Eq.\eqref{fv sol}, we get 
\begin{equation}\label{lb fv*2_1}
    f^*(t,x;v^*,q)\ge C_f\sqrt{\Delta(q)},
\end{equation}
for large enough $q$, in the chosen terminal space-time window.

\subsection{Lower bound of $h$}
One could obtain this bound using the explicit representation of \(h=-\partial_x\gamma_1\) from Eq.~\eqref{eq:gamma_x}. However, a more abstract argument is both cleaner and more direct, and we therefore follow that approach.
We will use the Aronson-Nash estimate \citep{aronson1967bounds,aronson1968non} to obtain a lower bound for $h$. However, this method cannot be applied directly to the governing equation Eq. \ref{eq:h_FP} due to the presence of the unbounded linear drift term. Since we want to exploit the known final time condition on $h$ for obtaining the lower bounds, we first obtain the time-reversed PDE
\begin{equation*}\label{h(tau_x)}
    \partial_\tau \tilde h(\tau,x)=\dfrac{\sigma^2}{2}\partial_{xx}\tilde h(\tau,x)+\partial_x(H(T-\tau,x) \tilde h(\tau,x)),
\end{equation*}
with the \emph{initial} condition $\tilde h(0,x)=\delta_0(x)$. Here, $\tilde h(\tau,x):=h(t,x)$.

To eliminate the unbounded linear drift term, we introduce a new spatial variable $y$, and a field $\rho_1$, via the relations $x=e^{\Lambda(\tau,0)}y$ and $\tilde h(\tau,x)=e^{-\Lambda(\tau,0)}\rho_1(\tau,y)$, where

\begin{align*}
    \label{e^Lam(tau)}\Lambda(\tau,0) =\pg{-}\int_{T-\tau}^{T}(a-b^2\,p(s)/r)ds.
\end{align*}
It follows from Eq. \ref{alpha p(t)} that $e^{\pg{-}{\Lambda(\tau,0)}}=\alpha(T,T-\tau)$, and hence from Eq. \ref{eq:h_FP}, we obtain the following PDE for $\rho_1(\tau,y)$,
\begin{equation}
    \label{FP trf_h}\partial_\tau\rho_1=\dfrac{\sigma^2}{2}e^{-2\Lambda(\tau,0)}\partial_{yy}\rho_1+e^{-\Lambda(\tau,0)}\hat B(T-\tau,y)\partial_y\rho_1+\hat B_{\pg{x}}(T-\tau,y)\rho_1,
\end{equation}
with the initial condition $\rho_1(0,y)=\delta_0(y)$. Here, $\hat B(T-\tau,y):=B(T-\tau,e^{\Lambda(\tau,0)}y)$, and $\hat B_{\pg{x}}(T-\tau,y):=B_x(T-\tau,e^{\Lambda(\tau,0)}y)$. Recall from Eqs. (\ref{ub B},\ref{eq:d_xB}) that first term is uniformly bounded on $[0,T]\times \mathbb{R}$, and the second term is positive. 

From Eq. \ref{alpha p(t)},
\begin{equation*}
    e^{-\Lambda(\tau,0)}=\pg{\dfrac{1-K(q)}{e^{\Delta(q) \tau}-K(q)e^{-\Delta(q) \tau}}}.
\end{equation*}

For our chosen terminal time window $\tau\in[0,\delta T(q)]$, and for $q$ large enough, arguments similar to those in Section \ref{sec:lb_uhat} yield that $C_1\leq e^{-\Lambda(\tau,0)} \leq C_2$. Hence, we conclude from the above discussion that in Eq. \ref{FP trf_h} in the terminal time window, the coefficients of the first and second order terms are bounded independently of $q$, and the coefficient of the zero order term is positive.

Now consider the following auxiliary PDE,

\begin{equation}
    \label{eq:frozen}\partial_{\pg{\tau}}\tilde \rho_1 =D_1\partial_{yy}\tilde\rho_1 +D_2 \partial_y\tilde\rho_1, 
\end{equation}
where 
\begin{equation*}
\begin{split}
        & \text{for}\, \tau\in [0,\delta T(q) ]
\left\{
\begin{array}{l}
D_1=\dfrac{\sigma^2}{2}e^{-2\Lambda(\tau,0) } \\ 
D_2=e^{-\Lambda(\tau,0) }\hat B(T-\tau,y) 
\end{array}
\right. \\& \text{for}\, \tau\in ( \delta T(q),T]
\left\{
\begin{array}{l}
D_1=\dfrac{\sigma^2}{2}e^{-2\Lambda(\delta T(q),0) } \\ 
D_2=e^{-\Lambda(\delta T(q),0) }\hat B(T-\delta T(q),y) .
\end{array}
\right.
\end{split}\end{equation*}
For $\tau\in [0,\delta T(q) ]$, Eq. \ref{eq:frozen} has the same coefficients for the first and second order terms as Eq. \ref{FP trf_h}, but its coefficients are `frozen' for $\tau>\delta T(q)$ to their value at $\tau=\delta T(q)$. By the above discussion, coefficients of Eq. \ref{eq:frozen} are bounded independently of $q$ for all $t$. Hence, the Aronson-Nash estimate for the fundamental solution of Eq. \ref{eq:frozen}, on $\tau\in[0,T]$, is
\begin{equation*}
    \dfrac{\tilde C_3}{\sqrt{\tau-s}}e^{-\dfrac{(y-\eta)^2}{ \tilde C_4(\tau-s)}}\le\tilde P_1(\tau,y,s,\eta),
\end{equation*}
where $\tilde C_3$ and $\tilde C_4$ only depend on $T$, and are independent of $q$. 

Now since the first order and second order coefficients of the two PDEs agree on the terminal time window, and the zero order coefficient in Eq. \ref{FP trf_h} is positive, $P_1(\tau,y,s,\eta)\geq\tilde{P}_1(\tau,y,s,\eta)$ over $\tau\in [0,\delta T(q) ]$ by the maximum principle, and hence,
\begin{equation*}\label{P1 sup}
     \dfrac{\tilde C_3}{\sqrt{\tau-s}}e^{-\dfrac{(y-\eta)^2}{ \tilde C_4(\tau-s)}}\le P_1(\tau,y,s,\eta).
\end{equation*}
\pg{The comparison for the delta initial datum is obtained by applying the maximum principle to common nonnegative smooth approximations of $\delta_0$ and passing to the limit.}
The solution of the Eq.\eqref{FP trf_h} is 
\begin{equation*}\label{lb h(tau)1}
\begin{split}
        \rho_1(\tau,y)=\int_\mathbb{R} P_1(\tau,y,0,\eta)\delta_0(\eta) d\eta=P_1(\tau,y,0,0),
\end{split}
\end{equation*}
where the above bound implies $P_1(\tau,y,0,0)\ge \dfrac{\tilde C_3}{\sqrt{\tau}}\exp{\left(-\dfrac{y^2}{\tilde C_4\tau}\right)}.$ Then, using $\tilde h(\tau,x)=e^{-\Lambda(\tau,0)}\rho_1(\tau,y)$, $y=e^{-\Lambda(\tau,0)}x$, and $e^{-\Lambda(\tau,0)}\ge C_1$,
\begin{equation}\label{lb h(tau)}
    \tilde h(\tau,x)\ge \dfrac{C_1}{\sqrt{\tau}}\exp{\left(-\dfrac{x^2}{C_2\tau}\right)},
\end{equation}
in the chosen terminal space-time window for large enough $q$.
\subsection{Proof of Theorem \ref{th_main}}
\begin{proof} Combining the lower bounds in Eqs. \eqref{lb uhat2},\eqref{lb fv*2_1}, and \eqref{lb h(tau)} into Eq. \ref{lb_z_subset},  
\begin{equation*}
    \label{lb Z} \begin{split}
        Z(T;q)&\ge\int_{T-\delta T}^T\int_{-\delta x}^{\delta x} b (-\hat{u}(t,x))f^*(t,x)h(t,x)\,dx\,dt\\&\ge\int_0^{\delta T}\int_{-\delta x}^{\delta x} C_fC_uC_1 q^{5/4} \sqrt{\tau} \exp{\left(-\dfrac{x^2}{C_2\tau}\right)} \,dx \,d\tau \\& \ge C_1  q^{5/4} \int_0^{\dfrac{1}{\Delta(q)}} \sqrt{\tau} \left(\int_{0}^{\dfrac{1}{\sqrt{\Delta(q)}}}\exp{\left(-\dfrac{x^2}{C_2\tau}\right)}dx\right)\,d\tau\\&
        \geq C_1  q^{5/4} \int_0^{\dfrac{1}{\Delta(q)}} \tau d\tau\\& \geq C_1q^{1/4},\nonumber
    \end{split}
\end{equation*}
for large enough $q$. Hence, $G'(v^*;q)>1$ for large enough $q$, which concludes the proof. \end{proof}

\subsection{Symmetry breaking and emergence of additional fixed points}\label{sec:symbreak}
\begin{theorem}
    For $q>q_0$, the map $G$ has at least two asymmetric fixed points, in addition to the symmetric fixed point $v^*=0.5$.
\end{theorem}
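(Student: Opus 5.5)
The plan is an intermediate value argument on the continuous function $F(v)\triangleq G(v;q)-v$ on $[0,1]$, for fixed $q>q_0$. Three ingredients are needed: the self-map property $G(\cdot;q):[0,1]\to[0,1]$ from Section \ref{sec:Gmap}, the odd symmetry $G(1-v)=1-G(v)$ of Section \ref{sec:odd_G}, and Theorem \ref{th_main}, which gives $G'(0.5;q)>1$.

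\textbf{Step 1 (continuity in $v$).} First I establish that $v\mapsto G(v;q)$ is continuous on $[0,1]$. This follows the argument of Section \ref{sec:Gcont}, with $v$ in place of $q$. The map $G_4$ is affine in $v$ by Eq. \eqref{X mean}. The map $G_3$ depends continuously on $X$ through the linear ODEs \eqref{hjb riccati2} and the explicit formulas \eqref{u*}--\eqref{Xj Sigma}, so $u^*$ varies continuously in the $U_T$ norm. Continuity of $G_2$ and $G_1$ then follows from the same Fokker--Planck stability results already invoked in Section \ref{sec:Gcont}. I expect this step to be the main technical point, but it is essentially already covered by the differentiability statements of Section \ref{sec:linv*} and Lemma \ref{lem:fhat}, applied at arbitrary $v$ rather than only at $v^*$.

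\textbf{Step 2 (sign near $v^*$).} Since $G(0.5)=0.5$, we have $F(0.5)=0$ and $F'(0.5)=G'(0.5;q)-1>0$ by Theorem \ref{th_main}. By the definition of the derivative, there is $\epsilon>0$ such that $F(0.5+\epsilon)>0$, and likewise $F(0.5-\epsilon)<0$. The second inequality also follows from the first by odd symmetry: $F(1-v)=1-G(v)-(1-v)=-F(v)$.

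\textbf{Step 3 (sign at the endpoints).} Because $G$ maps into $[0,1]$, we have $F(1)=G(1)-1\le 0$. If $F(1)=0$, then $v=1$ is itself an asymmetric fixed point. Otherwise $F(0.5+\epsilon)>0>F(1)$, and the intermediate value theorem yields $v_+^*\in(0.5+\epsilon,1)$ with $F(v_+^*)=0$. In either case we obtain a fixed point $v_+^*\in(0.5,1]$.

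\textbf{Step 4 (second fixed point).} By odd symmetry, $v_-^*=1-v_+^*\in[0,0.5)$ satisfies $G(v_-^*)=1-G(v_+^*)=1-v_+^*=v_-^*$. This gives a second fixed point, distinct from both $v_+^*$ and $0.5$. By the one-to-one correspondence of \citet{salhab2019collective}, these two fixed points correspond to symmetry-broken MFG equilibria representing consensus on either destination.
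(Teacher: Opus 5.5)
Your proposal is correct and follows essentially the same route as the paper: an intermediate value argument on $F(v)=G(v;q)-v$, using $F'(0.5)>0$ from Theorem \ref{th_main}, the bound $0\le G\le 1$ at the endpoints, and odd symmetry to pair the fixed points. The paper instead splits into the cases $F(0)>0,\ F(1)<0$ or $F(0)=F(1)=0$; your Step 1 also makes explicit the continuity of $G$ in $v$, which the paper invokes without comment, and your one-sided intermediate value argument followed by reflection is a slightly streamlined version of that case split.
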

\begin{proof}
Consider the function $F(v)\triangleq G(v;q)-v$ for any fixed $q>q_0$. Each root of $F$ is a fixed point of $G$. It follows from Theorem \ref{th_main} that $F(v^*)=0$ and $F'(v^*)>0$. This implies for small enough $\delta>0$, $F(v^*-\delta)<0$, and $F(v^*+\delta)>0$. Since $0\leq G(v;q)\leq 1$, there are two possibilities: 
\begin{itemize}
\item{Case 1: $F(0)>0$ and $F(1)< 0$}\\
In this case, by continuity of $F$, there must exist a fixed point $v_1$ with $0<v_1<v^*-\delta$. Similar argument guarantees the existence of another fixed point $v_2$ with $v^*+\delta\leq v_2 \leq 1$. By odd symmetry of $G$, we may choose $v_2=1-v_1.$
\item{Case 2: $F(0)=0$ and $F(1)=0$}\\
In this case, $v_1=0$ and $v_2=1$ are the two additional fixed points.
\end{itemize}
This concludes the proof. 
\end{proof}
\section{Conclusions and Discussion}

We have established symmetry breaking in the one dimensional min-LQG MFG with two equivalent terminal destinations. The analysis is based on an associated scalar self-consistency map, which reduces the coupled forward-backward FP-HJB equilibrium problem to a fixed-point problem for the terminal population split. 
Our main result shows that the symmetric fixed point is stable for sufficiently weak social effect, and loses stability when the social effect is sufficiently strong. The proof proceeds by analyzing the linearization of the scalar map at the symmetric fixed point, which encapsulates the dynamics of the linearized FP-HJB system around the symmetric MFG equilibrium.

This loss of stability of the symmetric fixed point has a direct consequence for multiplicity of MFG equilibria. Using the interval structure and odd symmetry of the scalar map, we prove the existence of additional fixed points when the symmetric fixed point is unstable. These new fixed points correspond to symmetry-broken MFG equilibria in which a majority of the population selects one of the two destinations. By symmetry, such equilibria occur in pairs, representing consensus toward either destination. Thus, the model provides a rigorous example of how social conformity can destroy uniqueness and generate multiple equilibria in a non-monotone finite-horizon MFG.

The stability notion used in this paper is the fixed-point stability of the scalar map. It should not be confused with temporal stability of the forward-backward MFG system. The finite-horizon MFG equations form a coupled two-point boundary value problem in time, rather than an autonomous evolution equation for which standard stability notions apply directly. The temporal stability of \emph{stationary} MFG equilibria has been investigated in other settings \citep{yinmehmeysha12,grover2024phase,cesaroni2024stationary,hofer2025synchronization}. Extending those notions to \emph{time-varying} MFG equilibria considered here, and relating temporal stability to the fixed-point stability studied in this work, remains an interesting direction for future work.

Our proof of the existence of symmetry-broken equilibria relies on the global interval structure of the scalar map, rather than on a local bifurcation classification. A characterization of the local structure (i.e., rigorously establishing pitchfork bifurcation) near the symmetric equilibrium is therefore a natural direction for future work. Such a classification would open the door to applying singularity theory and bifurcation unfolding methods \citep{franci2022breaking,golubitsky2012singularities}. In particular, this framework could help extend our results to min-LQG MFGs with asymmetric destinations, unequal prior preferences, or heterogeneous agents, where the exact odd symmetry is broken and the symmetric deadlock equilibrium unfolds into biased consensus states, as seen in numerical simulations of \citet{salhab2019collective}.

Finally, another natural direction is to extend the present framework beyond the 1D, two-choice setting considered here. If the agents' state space is multidimensional but there are still two terminal destinations, the associated map still acts on the interval $[0,1]$; however, the FP-HJB equations defining this map will be multidimensional parabolic PDEs. A complementary extension is to allow more than two terminal choices. In that case, the terminal split becomes a probability vector in a simplex, and the scalar map is replaced by a map on this simplex. Both extensions would require additional analytical tools, but the framework developed here provides a natural starting point for such analyses.

\begin{appendices}
\renewcommand{\theequation}{\thesection.\arabic{equation}}
\section{}
\subsection{Properties of $\alpha(t,t_0)$}

\pg{\label{sec:al_prop}}
Here, we derive some bounds on $\alpha$, the fundamental solution of the mean evolution Eq.\eqref{Xj Sigma} under control $u_j$. 
Define $A(t)\triangleq   a-\dfrac{b^2}{r}p(t)$. Then, using the expression of $p(t)$ from Eq.\eqref{alpha p(t)}, we have
\begin{equation}\label{def A(t)}
    A(t)=-\Delta(q)\dfrac{1+K(q)e^{-2\Delta(q)\,(T-t)}}{1-K(q)e^{-2\Delta(q)\,(T-t)}}<0,
\end{equation}
using Eq. \ref{eq:K(q)}. Therefore, by Eq.\eqref{Xj Sigma}, the lower and upper bounds of $\alpha(\tau,t)=\exp(\bigintssss_t^\tau A(s)ds)$ for $\tau\in[t,T]$ are given by 
\begin{equation}\label{alpha ineq}
    \alpha(T,t)\le\alpha(\tau,t)\le\alpha(t,t)=1, \qquad
\end{equation}
and $\alpha(T,t)$ is bounded as
\begin{equation}\label{alphaTt ineq}
    \dfrac{1-K(q)}{e^{\Delta(q)  T}-K(q)e^{-\Delta(q) T}}\le\alpha(T,t)\le\alpha(t,t)=1, \qquad
\end{equation}
using Eq.\eqref{alpha p(t)}.

\subsection{Proof of Lemma \ref{lem:fhat}}
\pg{
\label{sec:a2}
\begin{proof}
    Recall from Section \ref{sec:Gmap} that $u^*\in U_{T}$. Now we take arbitrary control perturbation $w\in U_{T}$ with $\|w\|_{U_T}<\kappa$, and hence $u^*+w \in U_T$. Now let $f^w$ be the solution of the Fokker-Planck Eq. \ref{FP f*_v} with control $u^*+w$. Since the total drift $ax+b(u^*+w)$ grows at most linearly in $x$, the resulting density $f^w$ has a finite second moment. Furthermore, its finite second moment can be uniformly bounded in time, over the bounded perturbation ball. We denote that bound as $M(\kappa)$.
    
    The evolution of the difference $\delta f\triangleq f^w-f^*$ is given by  \begin{equation}
        \partial_t\delta f=\mathcal{L}_{v^*}(\delta f)\pg{-}\partial_x(bwf^w),\nonumber
    \end{equation}
    with $\delta f(0,x)=0$, and $L_{v^*}$ given by Eq. \ref{eq:L*}.
    Using Duhamel, \begin{equation}
    \delta f(t,x)= \pg{-}\int_0^t \mathcal{U}_*(t,s)\partial_x(bw(s,x)f^w(s,x))ds,\nonumber
     \end{equation}
     where $\mathcal{U}_*$ is the propagator associated with $\mathcal{L}_{v^*}$.
Now we recall a result of Ref. \citet{menozzi2021density}, which provides a kernel bound on the propagation of $\phi\in L^1((1+|x|)^m,\mathbb{R})$ for diffusions with rough drifts.  From Theorem 1.2(ii) of \citet{menozzi2021density}, using integration by parts, we get for $m=0,1$

\begin{align} \nonumber
    \int_{\mathbb R}(1+|x|)^m
\left|[\mathcal{U}_*(t,s)\partial_x\phi](x)\right|dx
\le \frac{C}{\sqrt{t-s}}
\int_{\mathbb R}(1+|x|)^m|\phi(x)|dx.\\
\end{align}

In our optimal control drift decomposition $A(t)x+B(x,t)$ of Sec. \ref{sec:lbf*}, we can bound $A$ (follows from result of Sec. \ref{sec:al_prop}) and $B$ (Sec. \ref{sec:lbf*}) uniformly in time. In this case, the constant $C$ in the above kernel bounds can be chosen independent of time.

Then applying this theorem to $\delta f$ for all $t < T$, 
\begin{align}
    \int_{\mathbb{R}} (1+|x|)|\delta f(t,x)|dx\leq\int_{\mathbb{R}} \int_0^t \dfrac{1}{\sqrt{t-s}}\dfrac{|w(s,x)|}{1+|x|}(1+|x|)^2f^w(s,x)dsdx\nonumber\\
    =C\int_{\mathbb{R}}\int_0^t \dfrac{1}{\sqrt{t-s}}\dfrac{|w(s,x)|}{1+|x|}(1+|x|)^2f^w(s,x)dsdx<C(\kappa)\|w\|_{U_{T}},\label{eq:deltaf}
     \end{align}
where in the last step we have used the fact that $f^w$ has a finite second moment bounded uniformly in $t$.

Next, we consider Fréchet remainder $r\triangleq f^w-f^*-\hat{f}^w$, whose evolution equation is \begin{align}
     \partial_tr=\mathcal{L}_{v^*}(r)\pg{-}\partial_x(bw\delta{f}),\nonumber
\end{align}
with $r(0,x)=0$.

Proceeding similarly as above, 
\begin{align}
\|r(t,.)\|_{L^1}=\int_{\mathbb{R}} |r(t,x)|dx<C\int_{\mathbb{R}} \int_0^t \dfrac{1}{\sqrt{t-s}}\dfrac{|w(s,x)|}{1+|x|}(1+|x|)|\delta{f}(s,x)|dsdx\nonumber\\  
< C\|w\|_{U_{T}}\int_{\mathbb{R}}\int_0^t\dfrac{1}{\sqrt{t-s}}(1+|x|)|\delta{f}(s,x)|dsdx <C(\kappa)\|w\|_{U_{T}}^2\nonumber,
\end{align}
where we have used Eq. \ref{eq:deltaf} in the last step. A computation similar to Eq. \ref{eq:deltaf} applied to $\hat{f}^w$ shows the linearized map $w\mapsto\hat f^w$ is uniformly bounded for each $T_0<T$. The above two results imply that $u\rightarrow G_2(u)$ is Fréchet differentiable as a map into $C([0,T_0];L^1)$, with derivative $w\rightarrow \hat{f}^w_{[0,T_0]}$.

Now it remains to prove that $f^w\in C([0,T];L^1)$, and that Fréchet differentiability extends to $[0,T]$. For the former, we extend the perturbed drift $ax+b(u^*+w)$ beyond $T$ while preserving its linear growth bound. The local boundedness of this perturbed drift allows the use of Bogachev–Krylov–Röckner result (Corollary 2.2.7 of \citet{bogachev2009elliptic}), which along with Scheffé’s lemma \citep{resnick1999probability} establishes continuity on $(0,T]$. Standard heat kernel bounds extend it to $[0,T]$. 

Finally, the uniform remainder estimate proved above implies that $(f^{\epsilon w}-f^*)/\epsilon$ is uniformly Cauchy on $[0,T]$, hence extending $\hat{f}^w$ continuously to T, and yielding \\$\sup_{0\leq t\leq T}\|f^w(t)-f^*(t)-\hat{f}^w(t)\|_{L^1}\leq C(\kappa)\|w\|^2_{U_T}$. The Fréchet differentiability on  $[0,T]$ then follows from the boundedness of the linear map.
\end{proof}}

\end{appendices}

\section*{Competing Interests} The authors do not have any competing interests.
\section*{Acknowledgments}\pg{
This material is based upon work supported by the US National Science Foundation under Grant No. 2102112. The authors used ChatGPT (OpenAI, GPT-5/5.2/5.6) as a tool for language editing and literature search, where it helped locate Refs. \citet{menozzi2021density} and \citet{bogachev2009elliptic} used in proof of lemma 4.3.}
\clearpage

\bibliography{refs_paper2}
\end{document}